\date{}
\newtheorem{thm}{Theorem}[section]
\newtheorem{lemma}[thm]{Lemma}
\newtheorem{prop}[thm]{Proposition}
\newtheorem{remark}{Remark}
\newcommand{\lp}{\left(}
\newcommand{\rp}{\right)}
\newcommand{\dist}{\text{dist}} 
\newcommand{\divergence}{\text{div}}
\newcommand{\abs}[1]{\left\lvert #1\right\rvert}
\newcommand{\be}{\begin{equation}}
\newcommand{\ee}{\end{equation}}
\newcommand{\bee}{\begin{equation*}}
\newcommand{\eee}{\end{equation*}}
\newcommand{\bea}{\begin{eqnarray}}
\newcommand{\eea}{\end{eqnarray}}
\newcommand{\bs}{\begin{split}}
\newcommand{\es}{\end{split}}
\newcommand{\CC}{\mathbb C}
\newcommand{\HHs}{\mathbb H}
\newcommand{\RR}{\mathbb R}
\newcommand{\del}{\partial}
\newcommand{\e}{\varepsilon}
\newcommand{\Ric}{\mathrm{Ric}}
\newcommand{\calC}{{\mathcal C}}
\newcommand{\calO}{{\mathcal O}}
\newcommand{\calT}{{\mathcal T}}
\newcommand{\calU}{{\mathcal U}}
\newcommand{\olg}{\overline{g}}
\begin{document}

\title{Singular solutions of \\ fractional order conformal Laplacians}
\author{Maria del Mar Gonz\'alez \\ Universitat Politecnica de Catalunya
\and Rafe Mazzeo \\ Stanford University \and Yannick Sire \\Universit\'e Paul C\'ezanne, Aix-Marseille III}

\maketitle

\begin{abstract}
We investigate the singular sets of solutions of a natural family of conformally covariant pseudodifferential elliptic operators
of fractional order, with the goal of developing generalizations of some well-known properties of solutions of the singular
Yamabe problem.
\end{abstract}


\section{Introduction}

Let $(M,\olg)$ be a compact $n$-dimensional Riemannian manifold, $n \geq 3$. If $\Lambda \subset M$ is any
closed set, then the `standard' singular Yamabe problem concerns the existence and geometric properties of
complete metrics of the form $g = u^{\frac{4}{n-2}}\olg$ with constant scalar curvature. This corresponds to solving
the partial differential equation
\begin{equation}\label{eq:cL}
\Delta_{\olg} u + \frac{n-2}{4(n-1)} R^{\olg} u = \frac{n-2}{4(n-1)}R^g \, u^{\frac{n+2}{n-2} }, \qquad u > 0,
\end{equation}
where $R^g$ is constant and with a `boundary condition' that $u \to \infty$ sufficiently quickly at $\Lambda$ so that
$g$ is complete. (Note that in our convention, $\Delta_{\olg}$ is an operator with nonnegative spectrum.) It is known
that solutions with $R^g < 0$ exist quite generally if $\Lambda$ is large in a capacitary
sense \cite{Lab}, whereas for $R^g > 0$ existence is only known when $\Lambda$ is a smooth
submanifold (possibly with boundary) of dimension $k < (n-2)/2$, see \cite{MP}, \cite{F}.

There are both analytic and geometric motivations for studying this problem. For example, in the positive case ($R^g > 0$), solutions
to this problem are actually weak solutions across the singular set, so these results fit into the broader investigation of
possible singular sets of weak solutions of semilinear elliptic equations. On the geometric side is a well-known theorem
by Schoen and Yau \cite{SY} stating that if $(M,h)$ is a compact manifold with a locally conformally flat metric $h$ of
positive scalar curvature, then the developing map $D$ from the universal cover $\widetilde{M}$ to $S^n$, which
by definition is conformal, is injective, and moreover, $\Lambda := S^n \setminus D(\widetilde{M})$ has Hausdorff
dimension less than or equal to $(n-2)/2$. Regarding the lifted metric $\tilde{h}$ on $\widetilde{M}$ as a metric
on $\Omega$, this provides an interesting class of solutions of the singular Yamabe problem which are periodic with respect to a
Kleinian group, and for which the singular set $\Lambda$ is typically nonrectifiable. More generally, that paper also
shows that if $\olg$ is the standard round metric on the sphere and if $g = u^{\frac{4}{n-2}}\olg$ is a complete metric
with positive scalar curvature and bounded Ricci curvature on a domain $\Omega = S^n \setminus \Lambda$, then
$\dim \Lambda \leq (n-2)/2$.

In the past two decades it has been realized that the conformal Laplacian, which is the operator appearing as the
linear part of \eqref{eq:cL}, fits into a holomorphic family of conformally covariant elliptic pseudodifferential operators.
The operators in this family of positive even integer order are the GJMS operators, and these have a central role in conformal
geometry. Just as the Yamabe problem is naturally associated to the conformal Laplacian, so too are there higher order
Yamabe-type problems associated to the other GJMS operators, or more generally, also to these other conformally covariant
operators with noninteger order. The higher (integer) order Yamabe problems have proved to be analytically challenging
and provide insight into the GJMS operators themselves. Hence it is reasonable to hope that these fractional order (singular)
Yamabe problems will have a similarly rich development and will bring out interesting features of these
conformally covariant pseudodifferential operators. From a purely analytic perspective, little is known about regularity of
solutions of semilinear pseudodifferential
equations like these, and this family of geometric problems is a natural place to start. In fact, as we explain below,
fractional powers of the Laplacian have also appeared recently in the work of Caffarelli and his collaborators as
generalized Dirichlet to Neumann operators for certain singular divergence form elliptic equations, which further
indicates the worth of studying such operators.

The present paper begins an investigation into these questions. Our goals here are limited: beyond presenting
this set of problems as an interesting area of investigation, we prove a few
results which indicate how certain properties of the fractional singular Yamabe problem extend some well-known
results for the standard Yamabe equation.

To describe this more carefully, we first define the family of fractional conformal powers of the Laplacian. As we have
already indicated, the linear operator which appears as the first two terms on the left in \eqref{eq:cL} is known as
the conformal Laplacian associated to the metric $\olg$, and denoted $P_1^{\olg}$. It is conformally covariant in the sense
that if $f$ is any (smooth) function and $g = u^{\frac{4}{n-2}}\, \olg$ for some $u > 0$, then
\begin{equation}
P_1^{\olg} (uf) = u^{\frac{n+2}{n-2}}P_1^g (f).
\label{eq:cccL}
\end{equation}
Setting $f \equiv 1$ in \eqref{eq:cccL} yields the familiar relationship \eqref{eq:cL} between the scalar curvatures
$R^{\olg}$ and $R^g$.  $P_1$ is the first in a sequence of conformally covariant elliptic operators, $P_k$,
which exist for all $k \in {\mathbb N}$ if $n$ is odd, but only for $k \in \{1, \ldots, n/2\}$ if $k$ is even. The first construction
of these operators, by Graham-Jenne-Mason-Sparling \cite{GJMS} (for which reason they are known as the GJMS operators),
proceeded by trying to find lower order geometric correction terms to $\Delta^k$ in order to obtain nice transformation
properties under conformal changes of metric. Beyond the case $k=1$ which we have already discussed, the operator
\[
P_2=\Delta^2 +\delta\lp a_n Rg+b_n Ric\rp d+\tfrac{n-4}{2}Q_2,
\]
called the Paneitz operator (here $Q_2$ is the standard $Q$-curvature), had also been discovered much earlier than the operators $P_k$ with $k > 2$.

This leads naturally to the question whether there exist any conformally covariant pseudodifferential operators of noninteger
order. A partial result in this direction was given by Peterson \cite{P}, who showed that for any $\gamma$, the
conformal covariance condition determines the full Riemannian symbol of a pseudodifferential operator with
principal symbol $|\xi|^{2\gamma}$. Hence $P_\gamma$ is determined modulo smoothing operators, but it is by no
means clear that one can choose smoothing operators to make the conformal covariance relationships hold exactly.
The breakthrough result, by Graham and Zworski \cite{GZ}, was that if $(M,[\olg])$ is a smooth compact manifold
endowed with a conformal structure, then the operators $P_k$ can be realized as residues at the values $\gamma = k$ of
the meromorphic family $S(n/2 + \gamma)$ of scattering operators associated to the Laplacian on any Poincar\'e-Einstein manifold
$(X,G)$ for which $(M,[\olg])$ is the conformal infinity.   These are the `trivial' poles of the scattering operator, so-called
because their location is independent of the interior geometry; $S(s)$ typically has infinitely many other poles, which are
called resonances, the location and asymptotic distribution of which is a matter of considerable interest and ongoing study.
Multiplying this scattering family by some $\Gamma$ factors to regularize these poles, one obtains a holomorphic family of
elliptic pseudodifferential operators $P_\gamma^{\olg}$ (which patently depends on the filling $(X,G)$). An alternate construction of
these operators has been obtained by Juhl, and his monograph \cite{Juhl} describes an intriguing general framework for
studying conformally covariant operators, see also \cite{Juhl2}.

This realization of the GJMS operators has led to important new understanding of them, including for example the basic fact that
$P_\gamma^{\olg}$ is symmetric with respect to $dV_{\olg}$, (something not obvious from the previous fundamentally algebraic
construction). Hence even though the family $P_\gamma^{\bar g}$ is not entirely canonically associated to $(M,[\olg])$ (as we
explain in some detail below), its study can still illuminate the truly canonical operators which occur as special values at
positive integers, i.e.\ the GJMS operators.

For various technical reasons, we focus here only on the operators $P_\gamma$ when $\gamma \in \RR$, $|\gamma| \le n/2$.
These have the following properties: first, $P_0 = \mathrm{Id}$, and more generally, $P_k$ is the $k^{\mathrm{th}}$ GJMS
operator, $k = 1, \ldots, n/2$;  next, $P_\gamma$ is a classical elliptic pseudodifferential operator of order $2\gamma$
with principal symbol $\sigma_{2\gamma}(P_\gamma^{\olg}) = |\xi|^{2\gamma}_{\olg}$, hence (since $M$ is compact), $P_\gamma$
is Fredholm on $L^2$ when $\gamma > 0$; if $P_\gamma$ is invertible, then $P_{-\gamma} = P_{\gamma}^{-1}$; finally,
\begin{equation}
\mbox{if}\ g=u^{\frac{4}{n-2\gamma}}\olg, \qquad \mbox{then}\ P_\gamma^{\olg} (uf) = u^{\frac{n+2\gamma}{n-2\gamma}} P_\gamma^g (f)
\label{eq:ccfcL}
\end{equation}
for any smooth function $f$. Generalizing the formul\ae\ for scalar curvature ($\gamma = 1$) and the Paneitz-Branson
$Q$-curvature ($\gamma = 2$), we make the definition that for any $0 < \gamma \leq n/2$, $Q_\gamma^{\olg}$, the
$Q$-curvature of order $\gamma$ associated to a metric $\olg$, is given by
\begin{equation}
Q_\gamma^{\olg} = P_\gamma^{\olg}(1).
\label{eq:Qgamma}
\end{equation}

Let us comment further on the choices involved in these definitions. First, Poincar\'e-Einstein fillings $(X,G)$ of $(M,[\olg])$ (which
are defined at the beginning of \S 2), may
not always exist, and when they exist, they may not be unique.  The existence issue is not serious: the construction of \cite{GZ}
only uses that the metric $G$ satisfy the Einstein equation to sufficiently high order, and one can even take $X = M \times [0,1]$
with the conformal structure $[\olg]$ at $M \times \{0\}$ and with the other boundary $M \times \{1\}$ a regular (incomplete)
boundary for $G$. However, these comments indicate that the issue of lack of uniqueness is far worse, since there are
always infinite dimensional families of asymptotically Poincar\'e-Einstein fillings. Any choice of one of these fixes a
family of operators $P_\gamma^{\olg}$, and for each such choice $P_\gamma$ satisfies all the properties listed above.
As already noted, the complete Riemannian symbol of $P_\gamma^{\olg}$ is determined by the metric $\olg$ and
the conformal covariance; the choice of filling provides a consistent selection of smoothing terms in these
pseudodifferential operators for which the same covariance properties hold. Hence the $Q$-curvatures $Q_\gamma^{\olg}$ for
noninteger values of $\gamma$ are similarly ill-defined. In particular, except in certain special cases where there
are canonical choices of fillings (e.g.\ the sphere), it is not clear that the existence of a metric $\olg$ in a conformal
class such that $Q_\gamma^{\olg} > 0$ depends only on that conformal class.  We leave open these significant problems,
and in what follows, always make the tacit assumption that for any given $(M,[\olg])$, we have fixed an approximately
Poincar\'e-Einstein filling $(X,G)$ and used this to define the family $P_\gamma^{\olg}$. In other words, it is perhaps
more sensible to think of $P_\gamma$ and $Q_\gamma$ as quantities determined by the pair $(( M,[\olg]), (X,G))$.

In any case, generalizing \eqref{eq:cL}, consider the ``fractional Yamabe problem": given a metric $\olg$  on a compact
manifold $M$, find $u > 0$ so that if $g = u^{4/(n-2\gamma)}\olg$, then $Q_\gamma^g$ is constant. This amounts to solving
\begin{equation}
P_\gamma^{\olg} u = Q_\gamma^{g} u^{\frac{n+2\gamma}{n-2\gamma}} ,\quad u>0,
\label{eq:ccQ}
\end{equation}
for $Q_\gamma^g = \mbox{const.}$  More generally, we can simply seek metrics $g$ which are conformally related to $\olg$
and such that $Q_\gamma^g \geq 0$ or $Q_\gamma^g < 0$ everywhere.

This fractional Yamabe problem has now been solved in many cases where the positive mass theorem is not needed
\cite{fractional-Yamabe}, and further work on this is in progress.

As described earlier, it is is also interesting to construct complete metrics of constant (positive) $Q_\gamma$ curvature on open
subdomains $\Omega = M \setminus \Lambda$, or in other words, to find metrics $g = u^{4/(n-2\gamma)}\olg$ which are complete
on $\Omega$ and such that $u$ satisfies \eqref{eq:ccQ} with $Q_\gamma^g$ a constant.  This is the fractional singular Yamabe
problem. In the first few integer cases it is known that the positivity of the curvature places restrictions on $\dim \Lambda$ (see \cite{SY},
\cite{MP} for the case $\gamma=1$, \cite{Chang-Hang-Yang} for $\gamma=2$, and \cite{non-removable} for the analogous problem for the closely related $\sigma_k$ curvature).

Although it is not at all clear how to define $P^g_\gamma$ and $Q^g_\gamma$ on a general complete open manifold,
we can give a reasonable definition when $\Omega$ is an open dense set in a compact manifold $M$ and the metric
$g$ is conformally related to a smooth metric $\olg$ on $M$. Namely, we can define them by demanding that the
relationship \eqref{eq:ccfcL} holds.  Note, however, that this too is not as simple as it first appears since, because of
the nonlocal character of $P_\gamma^{\olg}$, we must extend $u$ as a distribution on all of $M$.
We discuss this further below.


The purpose of this note is to clarify some basic features of this fractional singular Yamabe problem and to establish
a few preliminary results about it. Our first result generalizes the Schoen-Yau theorem.
\begin{thm}\label{th:SY}
Suppose that $(M^n,\olg)$ is compact and $g = u^{\frac{4}{n-2\gamma}}\olg$ is a complete metric on $\Omega = M \setminus
\Lambda$,  where $\Lambda$ is a smooth $k$-dimensional submanifold.  Assume furthermore that $u$ is polyhomogeneous
along $\Lambda$ with leading exponent $-n/2 + \gamma$. If $0 < \gamma \leq \frac{n}{2}$, and if $Q_\gamma^g > 0$ everywhere
for any choice of asymptotically Poincar\'e-Einstein extension $(X,G)$ which defines $P_\gamma^{\olg}$ and hence
$Q_\gamma^g$, then $n$, $k$ and $\gamma$ are restricted by the inequality
\begin{equation}
\Gamma\lp\frac{n}{4} - \frac{k}{2} + \frac{\gamma}{2}\rp \Big/ \Gamma\lp\frac{n}{4} - \frac{k}{2} - \frac{\gamma}{2}\rp > 0,
\label{eq:dimrest}
\end{equation}
where $\Gamma$ is the ordinary Gamma function.  This inequality holds in particular when $k < (n-2\gamma)/2$,
and in this case then there is a unique distributional extension of $u$ on all of $M$ which is still a solution of \eqref{eq:ccQ}.
\label{th:fSY}
\end{thm}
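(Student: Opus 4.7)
The plan is to match leading order asymptotics along $\Lambda$ in the equation $P_\gamma^{\olg}u = Q_\gamma^g\,u^{(n+2\gamma)/(n-2\gamma)}$ and to extract the sign condition from the transcendental constant produced by a model computation. I would first localize: near any point $p\in\Lambda$, choose Fermi coordinates $(x',x'')\in\RR^{n-k}\times\RR^{k}$ with $r = |x'|$ the transverse radius and $y = x''$ the tangential variable. Because $P_\gamma^{\olg}$ is a classical elliptic pseudodifferential operator of order $2\gamma$ with principal symbol $|\xi|^{2\gamma}_{\olg}$, and because different Poincar\'e--Einstein fillings alter it only by smoothing operators, the leading singular behavior of $P_\gamma^{\olg}u$ along $\Lambda$ should coincide with that of the flat-model operator $(-\Delta)^\gamma$ applied to the leading polyhomogeneous term of $u$.

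Next I would insert the leading term $u = a_0(y)\,r^{-n/2+\gamma} + (\text{lower order in }r)$ supplied by the polyhomogeneity hypothesis, with $a_0 > 0$ since $g$ is complete. Because this leading profile is translation invariant in $y$, the $n$-dimensional Euclidean fractional Laplacian reduces to the transverse $(n-k)$-dimensional one, and a standard Fourier computation using $\mathcal F(|x'|^{-\alpha})(\xi') = c(n-k,\alpha)|\xi'|^{\alpha-(n-k)}$ gives
\[
(-\Delta_{x'})^\gamma\, r^{-n/2+\gamma} \;=\; C\,r^{-n/2-\gamma},\qquad
C = 2^{2\gamma}\,\frac{\Gamma\!\lp\tfrac{n}{4}+\tfrac{\gamma}{2}\rp\,\Gamma\!\lp\tfrac{n}{4}-\tfrac{k}{2}+\tfrac{\gamma}{2}\rp}{\Gamma\!\lp\tfrac{n}{4}-\tfrac{\gamma}{2}\rp\,\Gamma\!\lp\tfrac{n}{4}-\tfrac{k}{2}-\tfrac{\gamma}{2}\rp}.
\]
The right hand side of \eqref{eq:ccQ} has the same leading exponent, namely $Q_\gamma^g\,a_0(y)^{(n+2\gamma)/(n-2\gamma)}\,r^{-n/2-\gamma}$, so matching coefficients yields $C\cdot a_0(y) = Q_\gamma^g\cdot a_0(y)^{(n+2\gamma)/(n-2\gamma)}$. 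The hypotheses $Q_\gamma^g > 0$ and $a_0 > 0$ then force $C > 0$. For $0 < \gamma < n/2$ the outer $\Gamma$ factors $\Gamma(\tfrac{n}{4}\pm\tfrac{\gamma}{2})$ are manifestly positive, so the sign of $C$ is exactly the sign of the ratio in \eqref{eq:dimrest}.

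For the second assertion, when $k < (n-2\gamma)/2$ both arguments $\tfrac{n}{4}-\tfrac{k}{2}\pm\tfrac{\gamma}{2}$ are strictly positive and their $\Gamma$-values are positive, so \eqref{eq:dimrest} holds automatically. In the same regime $n/2+\gamma < n-k$, hence $u \sim r^{-n/2+\gamma}$ and $u^{(n+2\gamma)/(n-2\gamma)} \sim r^{-n/2-\gamma}$ are both locally integrable across $\Lambda$ and so define canonical $L^1_{\loc}$ distributions on $M$. I would then pair \eqref{eq:ccQ} with a test function, use the symmetry of $P_\gamma^{\olg}$ together with a cutoff argument near $\Lambda$, and pass to the limit to promote the equation from $\Omega$ to all of $M$ distributionally. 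Uniqueness follows because any alternative extension would differ by a distribution supported on $\Lambda$, and the local integrability bounds rule out any such singular concentration.

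The main obstacle is justifying the first reduction: one must show that the leading $r^{-n/2-\gamma}$ term of $P_\gamma^{\olg}u$ is genuinely produced by the flat-model computation rather than being contaminated by subleading pseudodifferential corrections. This amounts to tracking the polyhomogeneous expansion of $u$ through the full action of $P_\gamma^{\olg}$, using that its Riemannian symbol is fixed by conformal covariance (\cite{P}) and reduces to $|\xi|^{2\gamma}$ at $p$ in Fermi coordinates, while the smoothing ambiguity inherent in the choice of filling contributes only to strictly lower-order terms in the expansion. Carrying this asymptotic analysis out rigorously, so that the $\Gamma$-quotient is isolated as the true leading coefficient, is the technical heart of the argument.
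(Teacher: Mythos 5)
Your proposal follows essentially the same route as the paper: reduce to the flat transverse model $(-\Delta_{x'})^{\gamma}$ acting on $r^{-n/2+\gamma}$ via the symbol calculus for conormal/polyhomogeneous distributions (the paper cites Duistermaat--H\"ormander for this reduction), compute the resulting constant by the Fourier transform of homogeneous distributions, and read off the sign condition \eqref{eq:dimrest} after discarding the always-positive factors $\Gamma(\tfrac{n}{4}\pm\tfrac{\gamma}{2})$; the distributional extension is likewise obtained from local integrability of $r^{-n/2-\gamma}$ when $k<(n-2\gamma)/2$. Your matching of the coefficient against the right-hand side $Q_\gamma^g a_0^{(n+2\gamma)/(n-2\gamma)}r^{-n/2-\gamma}$ is a slightly sharper way to force strict positivity of the constant than the paper's appeal to $P_\gamma^{\olg}u>0$, but the argument is the same in substance.
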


\begin{remark}
Recall that $u$ is said to be polyhomogeneous along $\Lambda$ if in terms of any cylindrical coordinate system $(r,\theta,y)$
in a tubular neighborhood of $\Lambda$, where $r$ and $\theta$ are polar coordinates in disks in the normal bundle and
$y$ is a local coordinate along $\Lambda$, $u$ admits an asymptotic expansion
\[
u \sim \sum a_{jk}(y,\theta) r^{\mu_j} (\log r)^k
\]
where $\mu_j$ is a sequence of complex numbers with real part tending to infinity, for each $j$, $a_{jk}$ is nonzero
for only finitely many nonnegative integers $k$, and such that every coefficient $a_{jk} \in \calC^\infty$. The number $\mu_0$
is called the leading exponent if $\Re (\mu_j) > \Re (\mu_0)$ for all $j \neq 0$.  We refer to \cite{Mazzeo:edge-operators}
for a more thorough account of polyhomogeneity.
\end{remark}

\begin{remark}
As we have noted, inequality \eqref{eq:dimrest} is satisfied whenever $k < (n-2\gamma)/2$, and in fact is equivalent to this simpler
inequality when $\gamma = 1$.  When $\gamma=2$, i.e.\ for the standard $Q-$curvature, this result is already known: it is shown in
\cite{Chang-Hang-Yang} that complete metrics with $Q_2 > 0$ and positive scalar curvature must have singular set with
dimension less than $(n-4)/2$, which again agrees with \eqref{eq:dimrest}.
\end{remark}

We also present a few special existence results.  First, the following remark exhibits solutions
coming from Kleinian group theory where $\Lambda$ is nonrectifiable.
\begin{remark}
Suppose that $\gamma\in [1,n/2)$. Let $\Gamma$ be a convex cocompact subgroup of $\mbox{SO}(n+1,1)$ with
Poincar\'e exponent $\delta(\Gamma) \in [1, (n-2\gamma)/2)$. Let $\Lambda\subset S^n$ be the limit set of $\Gamma$.
Then $\Omega = S^n \setminus \Lambda$ admits a complete metric $g$ conformal to the round metric and with
$Q_\gamma^g > 0$.
\label{remark:Klein}
\end{remark}
As we explain below, this follows directly from the work of Qing and Raske \cite{Qing-Raske}.

Finally, one can also obtain existence of solutions when $\gamma$ is sufficiently near $1$ and $\Lambda$ is smooth
by perturbation theory.
\begin{thm}
Let $(M^n, [\olg])$ be compact with nonnegative Yamabe constant and $\Lambda$ a $k$-dimensional submanifold
with $k < \frac12 (n-2)$. Then there exists an $\epsilon > 0$ such that if $\gamma \in (1-\epsilon, 1 + \epsilon)$,
there exists a solution to the fractional singular Yamabe problem \eqref{eq:ccQ} with $Q_\gamma > 0$ which is complete
on $M \setminus \Lambda$.
\label{th:pert}
\end{thm}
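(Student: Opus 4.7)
The plan is to deform the classical Mazzeo--Pacard singular Yamabe metric by the implicit function theorem in the parameter $\gamma$. Under the hypothesis $k < (n-2)/2$ and the nonnegative Yamabe assumption, \cite{MP} produces a complete metric $g_1 = u_1^{4/(n-2)}\olg$ on $M \setminus \Lambda$ with $R^{g_1}$ equal to a positive constant $c_1$, and with $u_1$ polyhomogeneous along $\Lambda$ of leading exponent $-(n-2)/2$. I would take $(u_1, c_1)$ as the reference solution at $\gamma = 1$ and look for a nearby curve of solutions of \eqref{eq:ccQ}. Because the expected leading exponent $-(n-2\gamma)/2$ varies with $\gamma$, I would first fix a smooth family of reference functions $\eta_\gamma$ with that prescribed leading behavior at $\Lambda$ and with $\eta_1 = u_1$, then seek $u_\gamma = \eta_\gamma + w_\gamma$ with $w_\gamma$ in a fixed weighted edge space whose weight is chosen strictly greater than the leading exponent. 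This converts \eqref{eq:ccQ} into an equation $F(\gamma, w) = 0$ between fixed Banach spaces with $F(1, 0) = 0$.

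Next I would verify that $P_\gamma^{\olg}$, constructed from the scattering operator of the fixed Poincar\'e--Einstein filling $(X,G)$, depends holomorphically on $\gamma$ in a neighborhood of $\gamma = 1$ as a family of bounded maps between the chosen weighted edge spaces. The holomorphy comes from the Graham--Zworski formula, since $\gamma = 1$ is a `trivial' pole of $S(n/2 + \gamma)$ that has been removed by the $\Gamma$-factor regularization; the mapping property on edge spaces uses the pseudodifferential edge calculus of \cite{Mazzeo:edge-operators} to track the action of $P_\gamma$ on functions with prescribed polyhomogeneous expansions at $\Lambda$. The derivative of $F$ in $w$ at $(1, 0)$ is the Jacobi operator of the standard singular Yamabe problem,
\[
L_1 = \Delta_{\olg} + \tfrac{n-2}{4(n-1)} R^{\olg} - \tfrac{n+2}{n-2}\, c_1 u_1^{4/(n-2)},
\]
whose edge indicial roots at $\Lambda$ are explicitly computable. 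For $k < (n-2)/2$ the weight can be chosen strictly between two consecutive indicial roots so that $L_1$ is Fredholm of index zero on the appropriate weighted edge spaces, and together with the non-degeneracy of the Mazzeo--Pacard solution on such spaces $L_1$ is an isomorphism. The implicit function theorem then yields $w_\gamma$, hence $u_\gamma$, for $|\gamma - 1| < \epsilon$. Positivity of $u_\gamma$ follows from $u_1 > 0$ by continuity, and completeness of $g_\gamma$ on $M \setminus \Lambda$ follows from the prescribed leading exponent, which makes $u_\gamma$ blow up at $\Lambda$ at the rate of the model hyperbolic cusp; the constant $Q_\gamma^{g_\gamma}$ is then positive by continuity in $\gamma$.

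The hard part is the combined analytic setup: one needs a single scale of Banach spaces on which $P_\gamma^{\olg}$ is bounded and depends holomorphically on $\gamma$, on which the nonlinearity $u \mapsto u^{(n+2\gamma)/(n-2\gamma)}$ is smooth near $\eta_\gamma$, and on which the linearized operator $L_1$ exhibits the Fredholm theory of Mazzeo--Pacard. In the local case $\gamma = 1$ this is standard edge calculus, but the nonlocal character of $P_\gamma$ for $\gamma \neq 1$ means one must verify that its action preserves the polyhomogeneous structure at $\Lambda$ and that the corresponding indicial family deforms continuously in $\gamma$. Once this functional-analytic framework is in place, the perturbation step itself is a standard application of the implicit function theorem.
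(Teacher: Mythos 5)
Your overall strategy is the paper's: start from the Mazzeo--Pacard solution at $\gamma=1$, put the $\gamma$-dependent leading exponent $\gamma-n/2$ into the ansatz by hand, check that $P_\gamma^{\olg}$ acts boundedly on weighted edge H\"older spaces with the right mapping properties (via \cite{Mazzeo:edge-operators}), and close with the implicit function theorem in $\gamma$. However, there is one step that is wrong as stated: the claim that the linearization $L_1$ is Fredholm of index zero and, by ``non-degeneracy,'' an isomorphism on $\calC^{2,\alpha}_{\nu}(M\setminus\Lambda)$ for a weight $\nu$ between consecutive indicial roots. What \cite{MP} actually provides (and what the paper uses) is only \emph{surjectivity} of $L_1:\calC^{2,\alpha}_{\nu}\to\calC^{0,\alpha}_{\nu-2}$ for $0<\nu<k/2$, together with a bounded right inverse. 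On these weights $L_1$ has a large (infinite-dimensional) kernel coming from the moduli of singular Yamabe solutions with singular set $\Lambda$ --- variations of the leading coefficient and of the submanifold itself produce Jacobi fields in these spaces --- so it is certainly not injective, and an argument that needs $L_1^{-1}$ does not exist in the form you invoke.

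The repair is exactly the device the paper uses, and it simultaneously resolves the issue you flag about finding ``a single scale of Banach spaces'': compose the nonlinear map with a right inverse $G_\gamma$ of $L_\gamma$ before applying the implicit function theorem. Concretely, with the ansatz $u=c\,r^{\gamma-n/2}(1+v)$ one studies $N(\gamma,c,v)=G_\gamma\bigl(P_\gamma^{\olg}u-u^{(n+2\gamma)/(n-2\gamma)}\bigr)$, whose $v$-derivative at $(1,c_1,v_1)$ is $G_1L_1=\mathrm{Id}$; this lands back in the fixed space $\calC^{2,\alpha}_{\nu}$ even though the natural target $\calC^{0,\alpha+2(1-\gamma)}_{\nu-2\gamma}$ of $P_\gamma$ varies with $\gamma$ in both regularity and weight. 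The remaining inputs are that $L_\gamma$ is bounded between these spaces (the paper gets this by interpolation from the integer cases $\gamma=0,1,2$, or from the fact that $r^{2\gamma}L_\gamma$ is a pseudodifferential edge operator) and that surjectivity, established at $\gamma=1$, persists for nearby $\gamma$ by the edge parametrix construction; your proposed holomorphy of the scattering family is a reasonable alternative route to the continuity in $\gamma$, but it does not substitute for the surjectivity-plus-right-inverse formulation of the IFT.
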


Our final result is a growth estimate for weak solutions that are singular on $S^n\backslash\Omega$. Our result is not very strong in the sense that we do need to require that $u$ is a weak solution in the whole $S^n$. However, it provides the first insight into a general theory of weak solutions on subdomains of $S^n$.

\begin{prop}\label{prop:growth}
Let $g_c$ be the standard round metric on $S^n$, and $(B^{n+1},G)$ the Poincar\'e ball model of hyperbolic space, which
has $(S^n,[g_c])$ as its conformal infinity. Let $g = u^{\frac{4}{n-2\gamma}}g_c$ be a complete metric on a dense subdomain of
the sphere, $\Omega = S^n \setminus \Lambda$, with $Q_\gamma^g$ equal to a positive constant, and such that $u$ is a
distributional solution to
\be
\label{problem3}P_\gamma^{g_c} u=u^{\frac{n+2\gamma}{n-2\gamma}}
\ee
on $S^n$ (with $u$ finite only on $\Omega$). Then, for all $z\in\Omega$,
\[
u(z)\leq \frac{C}{d_{g_c}(z,\Lambda)^{\frac{n-2\gamma}{2}}},
\]
where $C$ depends only on $n$ and $\gamma$.
\end{prop}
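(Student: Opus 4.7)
The plan is to use the Möbius equivariance of the setup to reduce the growth estimate to a universal pointwise bound at a fixed reference point, and then to prove that bound via a blow-up and classification argument.

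I would start by observing that because $(B^{n+1},G)$ is the chosen Poincaré-Einstein filling of $(S^n,[g_c])$, its isometry group acts on $S^n$ as the full Möbius group, so by the scattering construction of $P_\gamma^{g_c}$ the covariance \eqref{eq:ccfcL} holds for every conformal automorphism $\phi$ of $S^n$. Given $z\in\Omega$ with $\delta:=d_{g_c}(z,\Lambda)$, choose such a $\phi$ with conformal factor $\rho$ (so $\phi^{*}g_c=\rho^{4/(n-2\gamma)}g_c$) sending a fixed basepoint $z_0$ to $z$ and normalized so that $d_{g_c}(z_0,\phi^{-1}(\Lambda))=1$. Setting $\tilde u:=\rho\,(u\circ\phi)$, the covariance yields that $\tilde u$ is again a positive distributional solution of \eqref{problem3} on $S^n$, and a direct computation gives $\tilde u(z_0)=c_{n,\gamma}\,\delta^{(n-2\gamma)/2}\,u(z)$ for a dimensional constant $c_{n,\gamma}$. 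The proposition then reduces to the universal bound $\tilde u(z_0)\leq C(n,\gamma)$ over all admissible solutions whose singular set is at $g_c$-distance $1$ from $z_0$.

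To prove this universal bound I would argue by contradiction. Suppose admissible $\tilde u_k$ exist with $\tilde u_k(z_0)\to\infty$. The Polacik-Quittner-Souplet doubling lemma, applied with weight $\tilde u_k^{2/(n-2\gamma)}$ and excluded set $\Lambda_k=\phi_k^{-1}(\Lambda)$, yields centers $y_k$ such that $m_k:=\tilde u_k(y_k)\geq\tilde u_k(z_0)$ and $\tilde u_k\leq 2^{(n-2\gamma)/2}m_k$ on $B(y_k,R_k m_k^{-2/(n-2\gamma)})$ for some $R_k\to\infty$. I would then rescale, $v_k(y):=m_k^{-1}\tilde u_k(\exp_{y_k}(m_k^{-2/(n-2\gamma)}y))$. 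Via the Caffarelli-Silvestre/scattering extension and interior regularity for the associated degenerate elliptic problem on $(B^{n+1},G)$, the rescaled operator converges to the Euclidean $(-\Delta)^\gamma$ and, up to a subsequence, $v_k$ converges locally uniformly to a positive bounded solution $v$ of $(-\Delta)^\gamma v=v^{(n+2\gamma)/(n-2\gamma)}$ on $\R^n$ with $v(0)=1$. The classification of Chen-Li-Ou for $\gamma\in(0,1)$ and its extensions for higher $\gamma$ then force $v$ to be a standard bubble.

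The final step, which I expect to be the main obstacle, is to extract a contradiction from this blow-up picture. My approach would be to use the integral representation $\tilde u_k(y_k)=\int_{S^n}G_\gamma(y_k,w)\,\tilde u_k(w)^{(n+2\gamma)/(n-2\gamma)}\,dV(w)$, where $G_\gamma$ is the explicit Green's function of $P_\gamma^{g_c}$ on the sphere (a multiple of an inverse power of chordal distance), split this into contributions near $y_k$ and near $\Lambda_k$, and balance the bubble-energy produced by the near-$y_k$ piece against a quantitative completeness lower bound on the near-$\Lambda_k$ piece. Making the completeness contribution uniform in the doubling-produced drift $y_k$ is the delicate point. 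A cleaner alternative, at least for $\gamma\in(0,1)$, would be to run a Kelvin-transform moving-sphere argument in the spirit of Li-Zhang adapted to $P_\gamma^{g_c}$, deriving the bound directly from the comparison principle for the fractional Laplacian and bypassing the blow-up altogether.
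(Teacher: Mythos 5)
Your reduction via M\"obius equivariance and the subsequent blow-up are sound and essentially parallel the paper's own argument: the paper achieves the same localization with Schoen's device of maximizing $f(z)=(\sigma-\rho(z))^{\frac{n-2\gamma}{2}}u(z)$ on a ball avoiding $\Lambda$ rather than with the doubling lemma, rescales at the maximum point by $\lambda_m=u_m(z_m)^{2/(n-2\gamma)}$, and likewise identifies the limit as the standard bubble via the classification of entire positive solutions of $(-\Delta)^\gamma v=v^{\frac{n+2\gamma}{n-2\gamma}}$. Up to that point the two arguments differ only cosmetically.

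The genuine gap is the step you yourself flag as ``the main obstacle'': converting the bubble limit into a contradiction. Neither of your two suggestions is carried out, and the first one (splitting the Green's function representation and invoking ``a quantitative completeness lower bound on the near-$\Lambda_k$ piece'') begs the question --- a lower bound on the mass near $\Lambda_k$ that is uniform in the drifting doubling centers $y_k$ is precisely the kind of quantitative control the proposition is trying to establish, and nothing in the hypotheses hands it to you. The paper's resolution is geometric: Theorem \ref{thm:convex-boundary} shows, by the Alexandroff moving-plane method applied after a stereographic projection sending a boundary point of a ball $\bar B\subset\Omega$ to infinity, that $\partial B$ is geodesically convex with respect to $g$. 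The essential inputs are that $u$ is a global distributional solution on $S^n$ (so the reflected function satisfies the same equation weakly) and that $u$ is genuinely singular along $\Lambda$ (so $w_\lambda$ never vanishes identically and the plane sweeps all the way to $\lambda=0$). This convexity is then incompatible with the blow-up picture, since in the limiting bubble metric small coordinate balls around the blow-up point become nearly the whole round sphere and their boundaries are geodesically concave. Your second alternative (a Li--Zhang moving-sphere scheme) lives in the same circle of ideas, but as stated it is a direction, not a proof; without it, or without the convexity theorem, your argument does not close.
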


There are many interesting questions not addressed here. For example, we point out again that there is not yet
a good definition of the family $P_\gamma^g$ on an arbitrary complete manifold $(\Omega,g)$. Provided one is able
to make this definition, it would then be useful to compute the $L^2$-spectrum of $P_\gamma^g$, even for some
specific examples such as $\HHs^n$ or $\HHs^{k+1} \times S^{n-k-1}$. Finally, it would also be important to obtain
the correct generalization of the Schoen-Yau theorem for the operators $P_\gamma$. We hope to address
these and other problems elsewhere.

\section{Fractional conformal Laplacians}

We now provide a more careful description of the construction of the family of conformally covariant operators $P_\gamma$,
and also give two alternate definitions of these operators in the flat case to provide some perspective.

As we have described in the introduction, Graham and Zworski \cite{GZ} discovered a beautiful connection
between the scattering theory of the Laplacian on an asymptotically hyperbolic Einstein manifold and the
GJMS operators on its conformal infinity. Let $(M,g)$ be a compact $n$-dimensional Riemannian manifold.
Suppose that $X$ is a smooth compact manifold with boundary, with $\del X = M$, and denote by $x$ a defining
function for the boundary, i.e.\ $x \geq 0$ on $X$, $x = 0$ precisely on $\del X$ and $dx \neq 0$ there. A metric
$G$ on the interior of $X$ is called conformally compact if $x^2 G = \overline{G}$ extends as a smooth nondegenerate
metric on the closed manifold with boundary. It is not hard to check that $G$ is complete and, provided that
$|dx|_{\overline{G}} = 1$ at $\del X$, the sectional curvatures of $G$ all tend to $-1$ at `infinity'.  The metric
$G$ is called Poincar\'e-Einstein if it is conformally compact and also satisfies the Einstein equation $\Ric^G = - n G$.
As we have explained, it is only necessary to consider asymptotically Poincar\'e-Einstein metrics; by definition,
these are conformally compact metrics which satisfy $\Ric^G = -nG + \calO(x^N)$ for some suitably large $N$
(typically, $N > n$ is sufficient).

The conformal infinity of $G$ is the conformal class of $\left. \overline{G} \right|_{T\del X}$; only the conformal class is
well defined since the defining function $x$ is defined up to a positive smooth multiple. If $g$ is any representative
of this conformal class, then there is a unique defining function $x$ for $M$ such that $G = x^{-2}(dx^2 + g(x))$ where
$g(x)$ is a family of metrics on $M$ (or rather, the level sets of $x$), with $g(0)$ the given initial metric.

We now define the scattering operator $S(s)$ for $(X,G)$. Fix any $f_0 \in \calC^\infty(M)$; then for all but a discrete set of
values $s \in \CC$, there exists a unique generalized eigenfunction $u$ of the Laplace operator on $X$ with eigenvalue $s(n-s)$.
In other words, $u$ satisfies
\be\label{eigenvalue-problem}
\left\{\begin{split}&(\Delta_G - s(n-s))u = 0\\
&u = f x^{n-s} + \tilde{f} x^{s}, \quad \mbox{for some}\ f, \tilde{f} \in \calC^\infty(\overline{X}) \quad
\mbox{with}\ \left. f \right|_{x=0} = f_0.
\end{split}\right.\ee
By definition, $S(s)f_0 = \tilde{f}|_{x=0}$. This is an elliptic pseudodifferential operator of order $2s - n$
which depends meromorphically on $s$; it is known to always have simple poles at the values $s = n/2, n/2 +1 ,
n/2 + 2, \ldots$. These locations are independent of $(X,G)$, hence are called the trivial poles of the
scattering operator. $S(s)$ has infinitely many other poles which are of great interest in other investigations,
but do not concern us here.  Letting $s=n/2+\gamma$, we now define
\begin{equation}
P_\gamma^g = 2^{2\gamma} \frac{\Gamma(\gamma)}{\Gamma(-\gamma)} S\lp\frac{n}{2} + \gamma\rp;
\end{equation}
because of these prefactors, one has that the principal symbol is
\begin{equation}
\sigma_{2\gamma}(P_\gamma^g) = |\eta|^{2\gamma}_g.
\end{equation}
The scattering operator satisfies a functional equation, $S(s) S(n-s) = \mbox{Id}$, which implies that
\begin{equation}
P_\gamma \circ P_{-\gamma} = \mbox{Id}.
\end{equation}
Finally, it is proved in \cite{GZ} that the operators $P_\gamma^g$ satisfy the conformal covariance equation \eqref{eq:ccfcL}.

This definition of the operators $P_\gamma$ depends crucially on the choice of the Poincar\'e-Einstein filling $(X,G)$.
Graham and Zworski point out that it is only necessary that the metric $G$ satisfy the Einstein equation to sufficiently
high order as $x \to 0$ in order that the properties of the $P_\gamma$ listed above be true (for $\gamma$ in a finite
range which depends on the order to which $G$ satisfies the Einstein equation). As we have discussed in the
introduction, it is always possible to find such metrics, and we suppose that one has been fixed.

Let us now address the issue of how to define $P_\gamma^g$ and $Q_\gamma^g$ when $\Omega$ is a dense open set
in a compact manifold $M$ and $g$ is complete and conformal to a metric $\olg$ which extends to all of $M$. (As usual,
we assume that $(M,\olg)$ has an asymptotically Poincar\'e-Einstein filling).  There is no difficulty
in using the relationship \eqref{eq:ccfcL} to define $P_\gamma^gf$ when $f \in \calC^\infty_0(\Omega)$ .
From here one can use an abstract functional analytic argument to extend $P_\gamma^g$ to act on any $f \in L^2(\Omega, dV_g)$.
Indeed, it is straightforward to check that the operator $P_\gamma^g$ defined in this way is essentially self-adjoint on
$L^2(\Omega, dV_g)$ when $\gamma$ is real. However, observe that $P_\gamma = \Delta_g^{\gamma} + K$, where
$K$ is a pseudo-differential operator of order $2\gamma-1$. Furthermore, $\Delta^{\gamma}$ is self-adjoint by
the functional calculus, so we can appeal to a classical theorem, see \cite{RS}, which states that a lower order
symmetric perturbation of a self-adjoint operator is essentially self-adjoint.

A separate, but also very interesting issue, is whether $Q_\gamma^g$ is a positive constant implies that the conformal factor
$u$ is a weak solution of \eqref{eq:ccQ} on all of $M$. This is true (with some additional hypotheses) when $\gamma = 1$,
cf. \cite{SY}.

We conclude this section with two alternate definitions of the operators $P_\gamma^{\olg}$ in the special
case where $(M,[\olg]) = \RR^n$ with its standard flat conformal class.

The canonical Poincar\'e-Einstein filling in this case is the hyperbolic space $X = \RR^{n+1}_+ = \RR^+_x \times \RR^n_y$ with
metric $G = x^{-2}(dx^2 + |dy|^2)$.

Since $\olg$ is flat, we have $P_\gamma^{\olg} = \Delta_{\olg}^\gamma$, and this can be written in either of the two equivalent forms:
\begin{eqnarray*}
\Delta^\gamma f(y) & = &  (2\pi)^{-n} \int_{\RR^n} e^{iy \eta} |\eta|^{2\gamma} \hat{f}(\eta)\, d\eta  \quad \mbox{or} \\
& = & \mbox{P.V.}\int_{\RR^n}\frac{f(y)-f(\tilde{y})}{|y-\tilde{y}|^{n+2\gamma}}\, d\tilde{y}.
\end{eqnarray*}
Both formul\ae\ can be regularized so as to hold for any given $\gamma$.

One other way that $\Delta^\gamma$ arises is as a generalized Dirichlet to Neumann map; this definition is essentially the
same as the one above involving the scattering operator (indeed, the point of view in geometric scattering theory
is that the scattering operator \emph{is} simply the Dirichlet to Neumann operator at infinity), but as recently
rediscovered by Chang-Gonzalez \cite{CG} in relation to the work on (Euclidean) fractional Laplacians by Caffarelli
and Silvestre \cite{cafS}, it is sometimes helpful to consider the equation in a slightly different form.
In the following result, let $(X,G)$ be an asymptotically Poincar\'e-Einstein filling of the compact manifold $(M^n,[\bar g])$.
Fix a representative $\bar{g}$ of the conformal class on the boundary and let $x$ be the boundary defining function on $X$
such that $g = x^{-2}(dx^2 + \bar{g}_x)$ with $\bar{g}_0 = \bar{g}$. Also, write $\bar{G} = x^2 G$; this is an incomplete
metric on $\overline{X}$ which is smooth (or at least polyhomogeneous) up to the boundary.
\begin{prop}(\cite{CG})\label{prop:extension}
Let $U = x^{\frac{n}{2}-\gamma} u$ and
\[
E :=\Delta_{\bar G}\lp x^{\frac{1-2\gamma}{2}}\rp x^{\frac{1-2\gamma}{2}}+\lp\gamma^2-\tfrac{1}{4}\rp x^{-1-2\gamma}+
\tfrac{n-1}{4n} R_{\bar G} x^{1-2\gamma}.
\]
Then, for any $f_0\in\mathcal C^{\infty}(M)$, the eigenvalue problem \eqref{eigenvalue-problem} is equivalent to
\be\label{divergence-equation}
\left\{\begin{split}
- \divergence \lp x^{1-2\gamma} \nabla U\rp + E U &=0\quad \mbox{ on }(X, \bar{G}), \\
\left. U \right|_{x=0}&=f_0\quad \mbox{on }M,
\end{split}\right.
\ee
where the divergence and gradient are taken with respect to $\bar{G}$. Moreover,
\[
P_\gamma^{\bar g} (f_0)=d_\gamma \lim_{x\to 0} x^{1-2\gamma}\partial_x U
\]
for some nonzero constant $d_\gamma$ depending only on $\gamma$ and $n$.
\end{prop}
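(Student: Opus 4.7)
My plan is to deduce \eqref{divergence-equation} from the scattering eigenvalue problem \eqref{eigenvalue-problem} by direct computation, relying on two inputs: the conformal covariance of the order-one conformal Laplacian in dimension $n+1$, and the (asymptotic) Einstein condition $R_G = -n(n+1)$ satisfied by the Poincar\'e-Einstein filling. The substitution relating $u$ and $U$ is conveniently factored as $u = x^{(n-1)/2}W$ with $W = x^{(1-2\gamma)/2}U$, so that the combined exponent equals $n/2-\gamma$ and $U$ captures the Dirichlet datum from the expansion $u = f x^{n/2-\gamma} + \tilde f x^{n/2+\gamma}$.

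The first step is to apply the identity
\[
P_1^G(x^{(n-1)/2}W) = x^{(n+3)/2} P_1^{\bar G}(W), \qquad P_1 = \Delta + \tfrac{n-1}{4n}R,
\]
which is the standard conformal covariance of the conformal Laplacian in dimension $n+1$ under the rescaling $G = x^{-2}\bar G$. Using this to rewrite $\Delta_G u$ in $(\Delta_G - (n^2/4-\gamma^2))u = 0$, and then expanding $\Delta_{\bar G}(x^{(1-2\gamma)/2}U)$ by the Leibniz rule, yields an equation on $(X,\bar G)$ with one second-order term, one gradient term involving $\langle dx, dU\rangle_{\bar G}$, and a collection of zeroth-order pieces. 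After multiplying by the appropriate power of $x$, the gradient term combines with $x^{1-2\gamma}\Delta_{\bar G}U$ --- by the product rule in reverse --- into precisely $-\divergence_{\bar G}(x^{1-2\gamma}\nabla U)$. This is the key identification of the weighted divergence operator.

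The remaining zeroth-order terms are $\Delta_{\bar G}(x^{(1-2\gamma)/2})\,x^{(1-2\gamma)/2}$ (from Leibniz), $\frac{n-1}{4n}R_{\bar G}\,x^{1-2\gamma}$ (from the covariance of $P_1$), and the combination $-\frac{n-1}{4n}R_G\,x^{-1-2\gamma} - (n^2/4-\gamma^2)x^{-1-2\gamma}$ (from converting $P_1^G$ back to $\Delta_G$ together with the eigenvalue contribution). Substituting $R_G = -n(n+1)$, the latter pair collapses algebraically to $(\gamma^2 - \tfrac14)x^{-1-2\gamma}$, so the coefficient matches exactly the operator $E$ in the statement. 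To conclude, the boundary data are read off from the asymptotic expansion: $U \sim f + \tilde f\,x^{2\gamma} + \cdots$ gives $U\vert_{x=0} = f_0$, while $x^{1-2\gamma}\partial_x U \to 2\gamma\,\tilde f\vert_{x=0} = 2\gamma\,S(n/2+\gamma)f_0$. Combining with the normalization $P_\gamma^{\bar g} = 2^{2\gamma}\Gamma(\gamma)\Gamma(-\gamma)^{-1}S(n/2+\gamma)$ then gives the claimed relation with $d_\gamma = 2^{2\gamma-1}\Gamma(\gamma)/[\gamma\,\Gamma(-\gamma)]$.

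The main obstacle I anticipate is the algebraic bookkeeping in the second step --- tracking every power of $x$ and every sign from the Leibniz and covariance expansions so that they conspire into the advertised weighted divergence form. A secondary but essentially cosmetic subtlety is that in the merely asymptotically Poincar\'e-Einstein setting one has $R_G = -n(n+1) + O(x^N)$, so the identification of $E$ holds only modulo higher-order corrections; these are harmless because $N$ can be chosen as large as desired, and they do not affect the asymptotic extraction of the scattering datum at $x=0$.
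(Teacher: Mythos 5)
Your derivation is correct and is essentially the proof of the cited source \cite{CG}: the paper itself states Proposition \ref{prop:extension} without proof, and the standard argument is exactly the one you give, namely conjugating \eqref{eigenvalue-problem} by the conformal covariance of the $(n+1)$-dimensional conformal Laplacian under $G=x^{-2}\bar G$, using $R_G=-n(n+1)$ (up to $\calO(x^N)$) to collapse the zeroth-order terms to $(\gamma^2-\tfrac14)x^{-1-2\gamma}$, and recombining the cross term into the weighted divergence. Two small remarks: you have (correctly) used $u=x^{\frac{n}{2}-\gamma}U$, which silently fixes a typo in the statement as printed ($U=x^{\frac{n}{2}-\gamma}u$ would not give $U|_{x=0}=f_0$); and the extraction $x^{1-2\gamma}\partial_x U\to 2\gamma\tilde f|_{x=0}$ for $\gamma\in[\tfrac12,1)$ uses that the expansion of $f$ has no linear term in $x$, which holds here because the asymptotically Poincar\'e--Einstein expansion of $g(x)$ is even to the relevant order.
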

The Euclidean version of this result (where $(X,G)$ is the hyperbolic upper half-space) was the one studied by
Caffarelli and Silvestre. The main advantage in this reformulation is that certain estimates are more
transparent from this point of view.


\section{Proofs}
We now turn to the proofs of Theorems \ref{th:fSY}  and \ref{th:pert}, and Remark  \ref{remark:Klein}.

\subsection{Dimensional restrictions on singular sets}

The idea for the proof of Theorem \ref{th:SY} is straightforward: let $u$ be a polyhomogeneous distribution on $M$ with singular
set along the smooth submanifold $\Lambda$. Suppose that the leading term in the expansion of $u$ is $a(y) r^{-n/2 + \gamma}$.
Then by a standard result in microlocal analysis \cite{DH}, the function $P_\gamma^{\olg}u$ is again polyhomogeneous and
has leading term $b(y) r^{-(2\gamma+n)/2}$, where $b(y) = \lambda a(y)$ for some constant $\lambda$. Now, if $u$ is
a conformal factor for which $g = u^{4/(n-2\gamma)}\olg$ has $Q_\gamma^g > 0$, then $P_\gamma^{\olg}u > 0$, which implies
that $\lambda > 0$.  So we must compute $\lambda$ to obtain \eqref{eq:dimrest}.

This microlocal argument states that if $u$ is polyhomogeneous, then the leading term of $P_\gamma^{\olg}u$ can be computed
using the symbol calculus (for pseudodifferential operators and for polyhomogeneous distributions), and more specifically,
that the principal symbol of $P_\gamma^{\olg}u$ is equal to the product of the principal symbols of $P_\gamma^{\olg}$ and
that of $u$. (Note that the principal symbol of a distribution conormal to a submanifold $\Lambda$ is computed in terms of
the Fourier transform in the fibres of $N\Lambda$.) In the present setting, this implies that the constant $\lambda$
is the same as for the model case when $M = S^n$ and $\Lambda$ is an equatorial $S^k$, so we now focus on this special case.

Transform $S^n$ to $\RR^n$ by stereographic projection, so that $\Lambda$ is mapped to a linear subspace $\RR^k$
and $\olg$ is the flat Euclidean metric (which we henceforth omit from the notation). Write $\RR^n \ni y  = (y',y'') \in
\RR^k \times \RR^{n-k}$, so that (in this model case) $u(y) = |y''|^{-n/2 + \gamma}$ for the singular metric $u^{\frac{4}{n-2\gamma}}\bar g$; then
\begin{multline*}
P_\gamma u(y) = \Delta^\gamma u (y) = (2\pi)^{-n} \int_{\mathbb R^n\times \mathbb R^n} e^{i(y-\tilde{y})\cdot \eta} |\eta|^{2\gamma} |y''|^{-n/2 + \gamma}\, d\tilde y d\eta
\\ = (2\pi)^{k-n} \int_{\RR^{n-k}\times\RR^{n-k} } e^{i(y'' - \tilde{y}'')\cdot\eta''} |\tilde{y}''|^{-n/2  + \gamma}\, d\tilde y'' d\eta''.
\end{multline*}

Now recall a well-known formula for the Fourier transform of homogeneous distributions in $\RR^N$:
\[
\int_{\mathbb R^N} e^{-iz \cdot \zeta}  |z|^{-N + \alpha} \,dz= c(N,\alpha) |\zeta|^{-\alpha},
\]
where
\[
c(N,\alpha) = \pi^{\alpha - N/2}\frac{\Gamma(\alpha/2)}{\Gamma((N -\alpha)/2)}.
\]
Applying this formula with $N = n-k$ (and replacing $y''$ by $y$ and $\eta''$ by $\eta$, for simplicity) yields first that
\[
\int_{\RR^N}  e^{-iy \cdot \eta} |y|^{-n/2 + \gamma}\, dy = c\lp n-k, \frac{n}{2} - k + \gamma\rp\, |\eta|^{-\frac{n}{2} + k - \gamma},
\]
then, multiplying by $|\eta|^{2\gamma}$ and taking inverse Fourier transform we obtain
\[
\frac{1}{(2\pi)^{n-k}} c\lp n-k, \frac{n}{2} - k + \gamma\rp c\lp n-k, \frac{n}{2} + \gamma\rp |y|^{-\frac{n}{2} - \gamma}.
\]
Altogether then,  the multiplicative factor $\lambda$ is equal to
\[
2^{k-n} \pi^{k-n + 2\gamma} \frac{\Gamma\lp\frac12( \frac{n}{2} - k + \gamma)\rp}{\Gamma(\frac12 \lp\frac{n}{2} - \gamma)\rp}
\frac{ \Gamma\lp \frac12 (\frac{n}{2} + \gamma)\rp}{\Gamma\lp \frac12 (\frac{n}{2} - k - \gamma)\rp}.
\]
Discarding the factors which are always positive (which includes $\Gamma(n/4 - \gamma/2)$ since $\gamma < n/2$),
we obtain \eqref{eq:dimrest}.

It is unfortunately slightly messy to write down the entire set of values of $k$ and $\gamma$ for which
\eqref{eq:dimrest} holds. However, if $k < (n-2\gamma)/2$, then both $\frac{n}{2} - k \pm \frac12 \gamma > 0$.
Furthermore, if $\gamma = 1$ and $A := \frac{n}{2} - k < \frac12$, then the $\Gamma$ function always takes on
values with different signs at $A + \gamma/2$ and $A - \gamma/2$.  More  generally, if we fix $n$ and $k$
and let $\gamma$ increase from $0$ to $n/2$, then $\Gamma(A + \gamma/2)/\Gamma(A - \gamma/2) = 1$
when $\gamma = 0$; $\Gamma(A - \gamma/2)$ changes sign every time $\gamma$ increases by $2$, whereas
$\Gamma(A + \gamma/2)$ also changes similarly, but only for $\gamma$ in the range $(0, -2A)$, where $A < 0$.

To prove the final statement of the theorem, note that if $-\gamma - n/2$, the leading exponent of $P_\gamma^{\olg} u$, is
greater than $k-n$, the codimension of $\Lambda$, then $P^{\bar g}_\gamma u$ cannot have any mass supported on $\Lambda$,
which means that $u$ is a weak solution of $P_\gamma^{\olg} u = Q_\gamma^g u^{(n+2\gamma)/(n-2\gamma)}$ on all of $M$.

\subsection{Kleinian groups}
We now turn to a special case where this problem has a direct relationship to hyperbolic geometry. Let $\Gamma$ be a convex
cocompact group of motions acting on $\HHs^{n+1}$. Thus $\Gamma$ acts discretely and properly discontinuously
on hyperbolic space, is geometrically finite and contains no parabolic elements. Its domain of discontinuity
is the maximal open set $\Omega \subset S^n$ on which the action of $\Gamma$ extends to a discrete and
properly discontinuous action; by definition of convex cocompactness, the quotient $\Omega/\Gamma = Y$
is a compact manifold with a locally conformally flat structure. The complement $S^n \setminus \Omega  = \Lambda$
is the limit set of $\Gamma$. Furthermore, the manifold $X = \HHs^{n+1}/\Gamma$ with its hyperbolic metric
is Poincar\'e-Einstein with conformal infinity $Y$ with the conformal structure induced from $S^n$. We use these
canonical fillings to define $P_\gamma$ and $Q_\gamma$.

In \cite{Qing-Raske}, Qing and Raske attack the problem of finding metrics of constant $Q_k$ curvature (with $k < n/2$
an integer). Their method involves finding metrics of constant $Q_\gamma$ curvature for all $1 \leq \gamma \leq k$.
They rephrase the problem $P_\gamma u = Q_\gamma u^{(n+2\gamma)/(n-2\gamma)}$ in the equivalent form $u =
P_{-\gamma} (Q_\gamma u^{(n+2\gamma)/(n-2\gamma)})$. The advantage of this modification is that $P_{-\gamma}$ is
a pseudodifferential operator of negative order, and its Schwartz kernel can be obtained by summing the
translates of the Schwartz kernel of $P_{-\gamma}$ on $S^n$ over the group $\Gamma$. This sum converges provided
the Poincar\'e exponent of $\Gamma$ is less than $\frac{n-2\gamma}{2}$, and because this is a convergent sum
one directly obtains explicit control and positivity of this operator. Fixing $1 \leq \gamma < n/2$ and
restricting to convex cocompact groups $\Gamma$ with Poincar\'e exponent in this range, they are able to
prove that if $Y = \Omega/\Gamma$ has positive Yamabe type, then it admits a metric of constant positive
$Q_\gamma$ curvature.

The proof of Remark \ref{remark:Klein} follows directly from this by lifting the conformal factor and solution metric to
$\Omega \subset S^n$. Namely, by the theorem of Schoen and Yau, the developing map of $Y$ is injective from the
universal cover $\tilde{Y}$ to $\Omega$, and the solution metric $g$ on $Y$ lifts to a complete metric $\tilde{g}$
on $\Omega$ of the form $u^{(n+2\gamma)/(n-2\gamma)} \olg$, where $\olg$ is the standard round metric. Using the
bound on the Poincar\'e exponent and the compactness of $Y$, standard lattice point counting arguments
show that $u(p) \leq c\, \mbox{dist}_{\olg}\, (p,\Lambda)^{(2\gamma-n)/2}$. This shows that not only is $u$
a solution of the modified integral equation, but is also a weak solution of \eqref{eq:ccQ} on all of $S^n$
and that $Q_\gamma^g$ is constant. Finally, by Patterson-Sullivan theory, the dimension of the limit set $\Lambda$
is precisely the Poincar\'e exponent $\delta(\Gamma)$. In other words, we have produced a solution to the fractional
singular Yamabe problem with exponent $\gamma \in [1,n/2)$ and with singular set of dimension less than
$n/2 - \gamma$.

The Qing-Raske theorem is not stated for the remaining cases $\gamma \in (0,1)$; it is plausible that their proof
may be adapted to work then, and hence the lifted solution would also give a solution to our problem
also for $\gamma$ in this range, but we do not claim this. Note, however, the results in \S 4 below concerning
growth estimates for solutions of this equation for this range of $\gamma$.

\subsection{Perturbation methods}

We come at last to the perturbation result.  We deduce existence of solutions for the fractional singular
Yamabe problem for values of $\gamma$ near $1$ from the the general existence result
in \cite{MP} for the singular Yamabe problem with $\gamma = 1$. Let $(M,\olg)$
and $\Lambda$ be a submanifold of dimension $k$ as in the statement of the theorem. (Slightly more
generally, we could let the different components of $\Lambda$ have different dimensions, but for
simplicity we assume that $\Lambda$ is connected.)  Then there is a function
$u$ on $M \setminus \Lambda$ such that $g = u^{4/(n-2)}\olg$ is complete and its scalar curvature
$Q_1^g$ is a positive constant. Moreover, it is known that the linearization of the equation \eqref{eq:cL}
at any one of the solutions $u$ constructed in \cite{MP} is surjective on appropriate weighted H\"older spaces.

In the following we phrase this rigorously and parlay this surjectivity into an existence theorem for $\gamma$
near $1$ using the implicit function theorem. Let $\calT^\sigma\Lambda$ denote the tube of radius $\sigma$
(with respect to $\olg$) around $\Lambda$; this is canonically diffeomorphic to the neighourhood of
radius $\sigma$ in the normal bundle $N\Lambda$ for $\sigma$ sufficiently small, and we use this to transfer
cylindrical coordinates $(r,y,\theta) \in [0,\sigma) \times \calU_y \times S^{n-k-1}$ in a local trivialization of
$N\Lambda$ to Fermi coordinates in $\calT^\sigma\Lambda$.

We use these coordinates to define weighted H\"older spaces with a certain dilation covariance property.
For $w \in \calC^0(\calT^\sigma\Lambda)$, let
\[
\|w\|_{e, 0,\alpha,0}=\sup_{z \in \mathcal T^\sigma \Lambda} |w| +\sup_{z, \tilde z \in
\mathcal T^\sigma \Lambda} \frac{(r+\tilde r)^\alpha |w(z)-w(\tilde z)|}{|r-\tilde r|^\alpha +
|y-\tilde y|^\alpha+(r+\tilde r)^\alpha |\theta -\tilde \theta|^\alpha}.
\]
and denote by $\calC^{0,\alpha}_e(M \backslash \Lambda)$ the space of all functions $w \in \calC^0(T^\sigma\Lambda)$
such that this norm is finite. The initial subscript $e$ in the norm signifies that these are `edge' H\"older spaces. Next,
$\calC^{k,\alpha}_e(M\backslash \Lambda)$ denotes the subspace of $\calC^k(M \backslash \Lambda)$ on which the norm
\[
\|w\|_{k,\alpha,0}=\|w\|_{k,\alpha,M_{\sigma/2}} + \sum_{j=0}^k \|\nabla^j w \|^{\mathcal T^\sigma \Lambda}_{e, 0, \alpha}
\]
is finite, where $M_{\sigma/2}= M \backslash \mathcal T_{\sigma/2}^\Lambda$. Finally, for $\nu \in \mathbb R$, let
\[
\calC_\nu^{k,\alpha}(M \backslash \Lambda)= \left \{ w=r^\nu \overline w : \,\, \overline w \in \calC^{k,\alpha}_e
(M \backslash \Lambda) \right \},
\]
with corresponding norm $|| \cdot ||_{e,k,\alpha,\nu}$.

Fixing $Q_\gamma^g = 1$, the linearization of $u \mapsto P_\gamma^{\olg}u - u^{ (n+2\gamma)/ (n-2\gamma)}$
is the operator
\[
v \mapsto L_\gamma v := P_\gamma^{\olg} v - \frac{n+2\gamma}{n-2\gamma} u^{\frac{4\gamma}{n-2\gamma}} v.
\]
Let $u$ be one of the solutions to the singular Yamabe problem ($\gamma = 1$) on $M \setminus \Lambda$
constructed in \cite{MP}. It is proved there that the solution $u$ has the form $u = c_1 r^{1 - n/2}(1 + v)$, where
$v \in \calC^{2,\alpha}_\nu$ for any $0 < \nu < k/2$ and $c_1 > 0$ depends only on the dimensions $k$ and $n$;
furthermore, the mapping
\[
L_1: \calC^{2,\alpha}_{\nu}(M \setminus \Lambda) \longrightarrow \calC^{0,\alpha}_{\nu -2}(M \setminus \Lambda)
\]
is surjective for $\nu$ in this same range.

We claim that for $\gamma$ sufficiently close to $1$, and for $\nu \in (\eta, k/2 - \eta)$, where $\eta > 0$
is some small fixed number, the mapping
\[
L_\gamma: \calC^{2,\alpha}_{\nu}(M \setminus \Lambda) \longrightarrow \calC^{0,\alpha + 2(1-\gamma)}_{\nu -2\gamma}
(M \setminus \Lambda)
\]
is also bounded and surjective. The boundedness follows by an interpolation argument. Indeed, the spaces $\calC^{k,\alpha}_0$
have interpolation properties which are identical to those for the ordinary H\"older spaces since they \emph{are} just
the standard H\"older spaces for the complete metric $\tilde{g} = \olg/r^2$; a minor adjustment shows that the
addition of the weight factor behaves as expected.  The assertion about the boundedness of $L_\gamma$
is clearly true for $\gamma = 0, 1, 2$, and hence by interpolation is true for all $\gamma$ close to $1$.
(It is true for the full range of $\gamma \in (0,2)$ if one makes the standard change, replacing the H\"older
space by a Zygmund space, when $\alpha + 2(1-\gamma)$ is an integer.)  This also follows from
\cite{Mazzeo:edge-operators} because $r^{2\gamma}L_\gamma$ is a pseudodifferential edge operator of
order $2\gamma$. Similarly, surjectivity follows from the construction of a parametrix for $L_\gamma$
in the edge calculus, from \cite{Mazzeo:edge-operators} again. This proves that $L_\gamma$ is Fredholm,
and since it is surjective at $\gamma = 1$, it must remain surjective for values of $\gamma$ which are
close to $1$. We write its right inverse as $G_\gamma$.

Now consider the mapping
\[
(\gamma, c, v) \longmapsto N(\gamma,c,v) := G_\gamma
(P_\gamma^{\olg} cr^{\gamma - n/2} (1 + v) - (c r^{\gamma - n/2}(1+v))^{\frac{n+2\gamma}{n - 2\gamma}}).
\]
If $u_1 = c_1 r^{1-n/2}(1+v_1)$ is the solution to the singular Yamabe problem from \cite{MP}, then $N(1,c_1,v_1) = 0$.
Let $c\in (c_1 -\e, c_1 + \e)$, and similarly, $v - v_1$ lie in a ball of radius $\e$ about $0$ in
$\calC^{2,\alpha}_\nu$. Clearly $\left. D_v N\right|_{(1,c_1,v_1)} = G_1 L_1 = \mbox{Id}$. The implicit function theorem now
applies to show that for every $(\gamma,c)$ near to $(1,c_1)$, there exists a unique $v_\gamma \in \calC^{2,\alpha}_\nu$
with norm less than $\e$ such that
$u_\gamma = c r^{\gamma - n/2} (1 + v_\gamma)$ is a solution of the fractional singular Yamabe problem with singular set $\Lambda$.


\section{Growth estimates for weak solutions on $S^n$}
In this final section we furnish the proof of Proposition \ref{prop:growth}: if $\gamma \in (0,1)$ and $\Omega \subset S^n$ is dense, then
any weak solution of the fractional singular Yamabe problem
\be\label{Yamabe1}
P_\gamma^{g_c}(u)=u^{\frac{n+2\gamma}{n-2\gamma}} \ \ \mbox{ in }S^n,\quad u>0, \quad u \mbox{ singular along }S^n\backslash\Omega
\ee
satisfies a general growth estimate. This is a direct adaptation of Schoen's proof (which is written out in full in
\cite{Pollack:compactness}) for the case $\gamma = 1$.

We first comment on the local regularity for solutions of \eqref{problem3}. There are several ways to deduce the
necessary estimates. The path we follow uses the equivalence, as described in \ref{prop:extension}, of
\eqref{problem3} with the extension problem \eqref{divergence-equation}:
\be\label{divergence-equation2}
\left\{\begin{split}
-\divergence \lp x^{1-2\gamma} \nabla U\rp + E(x) U &=0, \quad \mbox{in } (X,\bar G),\\
-y^{1-2\gamma}\partial_x U&=c_{n,\gamma} U^{\frac{n+2\gamma}{n-2\gamma}}, \quad\mbox{on } x=0;
\end{split}\right.
\ee
here $U=x^{\frac{n}{2}-\gamma} u$, $\bar G=x^2 G$.

From this point of view, we can use the linear regularity theorem \cite[Theorem 7.14]{Mazzeo:edge-operators} to
prove that $U$ is smooth up to $x=0$ away from $\Lambda$. This can also be deduced using standard elliptic
estimates for the pseudodifferential operator $P_\gamma^{g_c}$, but we refer also to more classical sources from
which this can also be deduced, in particular the paper by \cite{Fabes-Kenig-Serapioni:local-regularity-degenerate};
we also refer to more recent references \cite{Cabre-Sire:I} (where many properties of the solution are written down), and
\cite{fractional-Yamabe} (which holds for more general ambient metrics). In particular, from these last papers, one
has that Schauder and local  $L^p\to L^\infty$ estimates hold, and the equation also satisfies the standard maximum principles.

Fix $z_0\not\in\Lambda$ and choose $\sigma < \dist_{g_c}(z_0,\Lambda)$. For simplicity, write $\rho(z) :=\dist_{g_c}(z, z_0)$. Now define
\[
f(z):=\lp\sigma-\rho(z)\rp^{\frac{n-2\gamma}{2}}u(z);
\]
note that $f = 0$ on $\partial B_\sigma(x_0)$.

It suffices to show that $f(z)\leq c$ for some $c > 0$ and for all $z\in B_\sigma(z_0)$ since if we choose $\sigma=\dist(z_0,\Lambda)/2$,
then $f(z_0)=\sigma^{\frac{n-2\gamma}{2}} u(z_0)$, and hence
\[
u(z_0)\leq \frac{c}{d(z_0,\Lambda)^{\frac{n-2\gamma}{2}}},
\]
which would finish the proof.

We prove this claim by contradiction. Assume that no such $c$ exists. Then there exists a sequence
$\{u_m,\Lambda_m, \sigma_m, z_{0,m}, z_m\}$ such that for all $m$, $f_m$ attains its maximum
in $B_{\sigma_m}(z_{0,m})$ at $z_m$, and
\[
f(z_m):=(\sigma_m-\dist(z_m, z_{0,m}))^{\frac{n-2\gamma}{2}} u_m(z_m) > m.
\]
Since $(\sigma_m-\dist(z_m, z_{0,m}))^{\frac{n-2\gamma}{2}}\leq \sigma_m^{\frac{n-2\gamma}{2}} \leq C$ for all $m$,
we see that necessarily $u_m(z_m)\to\infty$.

Let $z$ be a system of Riemann normal coordinates centered at $z_m$, so that the corresponding metric coefficients
satisfy $(g_c)_{ij}=\delta_{ij}+O(\abs{z}^2)$. (As $m$ varies, these coordinate systems also vary, but there is no
reason to include this in the notation.)  Set $\lambda_m=\lp u_m(z_m)\rp^{\frac{2}{n-2\gamma}}$; we consider the dilated
coordinate system $\zeta =\lambda_m z$, the corresponding sequence of metrics $\hat g_m$, where
$$
g_c=\lambda_m^{-2} \sum_{i,j=1}^n (g_c)_{ij}(\zeta/\lambda_m) d\zeta^i d\zeta^j:= \lambda_m^{-2} \hat g_m,
$$
and finally the dilated family of solutions
$$
v_m(\zeta):=\lambda_m^{-\frac{n-2\gamma}{2}} u_m\lp \frac{\zeta}{\lambda_m}\rp.
$$
By construction, $v_m(0) = 1$ for all $m$, and
$$
g_m:=u_{m}^{\frac{4}{n-2\gamma}}g_c=v_{m}^{\frac{4}{n-2\gamma}}\hat g_m.
$$
We show below that $\hat{g}_m$ and $v_m$ are defined on an expanding sequence of balls on $\RR^n$, and it is then
clear that $\hat{g}_m$ converges to the Euclidean metric uniformly in $\calC^\infty$ on any compact subset.

Let $r_m = \frac12 (\sigma_m - \rho(z_m))$, or equivalently, $\rho_m(z_m) + 2r_m = \sigma_m$. Then
\[
\sigma_m - \rho(z_m) \geq \sigma_m - \rho_m(z_m) - r_m = r_m
\]
on the ball $\mbox{dist}_{g_c}(z,z_m) < r_m$, and hence on this same ball,
\[
u_m(z) \leq \left(\frac{\sigma_m - \rho_m(z_m)}{\sigma_m - \rho_m(z)}\right)^{\frac{n+2\gamma}{n-2\gamma}} u_m(z_m) \leq c \, u_m(z_m),
\quad c=2^{\frac{n-2\gamma}{2}}.
\]
The corresponding ball in rescaled coordinates contains
$\{\zeta: |\zeta| < m^{\frac{2}{n-2\gamma}}\}$, hence has radius tending to infinity. By construction, $v_m(z)\leq c$ on
this entire ball, and $v_m(0) \equiv 1$.  Since these functions are uniformly bounded and satisfy the converging
set of elliptic pseudodifferential equations
\be
\label{problem1}P_{\gamma}^{\hat g_m} v_m=v_m^{\frac{n+2\gamma}{n-2\gamma}},
\ee
we conclude using the local regularity theory (which is straightforward since $v_m$ is bounded) that $v_m$ is
bounded in $\calC^{2,\alpha}$ of every compact set, and hence we can extract a convergent subsequence.
We thus obtain a smooth solution $v$ to the `flat' equation
\be
\label{problem2}-(\Delta_{\mathbb R^n})^\gamma v=v^{\frac{n+2\gamma}{n-2\gamma}}\quad \mbox{in }\mathbb R^n.
\ee
Since each $v_m > 0$, we see that $v \geq 0$, but $v \not\equiv 0$ since $v(0) = 1$. There is a maximum
principle for this equation \cite[Corollary 3.6]{fractional-Yamabe} when $0 < \gamma \leq 1$, so we
conclude that $v > 0$ on all of $\RR^n$.

There is a complete characterization of positive solutions of  \eqref{problem2}, \cite[\S 5]{fractional-Yamabe}).
They are the extremal functions for the
embedding $H^\gamma(\mathbb R^n)\hookrightarrow L^{\frac{2n}{n-2\gamma}}(\mathbb R^n)$, and are necessarily of the form
$$
v(z)=C\lp\frac{\mu}{\abs{z-z_0}^2+\mu^2}\rp^{\frac{n-2\gamma}{2}},
$$
for some $\mu,c>0$ and $z_0\in\mathbb R^n$ (these are well known ``bubbles'').

The argument is completed using Theorem  \ref{thm:convex-boundary} below, which states that a small ball in $\Omega_m=S^n\setminus\Lambda_m$ must have a concave boundary with respect to to $g_m$ for $m$
sufficiently large. This is a contradiction to the already known limiting form of the $v_m$. The proof of Proposition \ref{prop:growth} is thus completed.

\qed\\

Note that the previous arguments do not require that $u$ is a weak solution in the whole $S^n$. The only place where this strong hypothesis is required is in the following convexity claim:

\begin{thm}\label{thm:convex-boundary}
In the same hypothesis as in Proposition \ref{prop:growth}, any open ball $B$ (with respect to $g_c$) with $\bar B\subset\Omega$, has boundary $\partial B$ which is geodesically convex with respect to $g$.
\end{thm}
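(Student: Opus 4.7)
The plan is to adapt Schoen's argument for $\gamma=1$ (see \cite{Pollack:compactness}) to the fractional setting. The strategy has two stages: first, compare $u$ with its conformal inversion $\tilde u$ through $\partial B$ via a maximum principle (implemented on the Caffarelli-Silvestre-Chang-Gonzalez extension), and second, translate the resulting pointwise inequality into a derivative bound on $\partial B$ equivalent to geodesic convexity.

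First I would introduce the conformal involution $\phi_B: S^n\to S^n$ that fixes $\partial B$ pointwise and exchanges $B$ with $S^n\setminus\bar B$; writing $\phi_B^* g_c = \psi^{4/(n-2\gamma)} g_c$ with $\psi|_{\partial B}=1$, the function $\tilde u := (\phi_B^* u)\,\psi$ is, by the conformal covariance \eqref{eq:ccfcL}, a positive weak solution of $P_\gamma^{g_c}\tilde u = \tilde u^{(n+2\gamma)/(n-2\gamma)}$ on $S^n$, with singular set $\phi_B(\Lambda)\subset B$ and $\tilde u = u$ on $\partial B$. The target of the comparison step is the inequality $u \geq \tilde u$ on $S^n\setminus\bar B$.

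To establish this comparison I would lift to the extension problem of Proposition \ref{prop:extension}. Let $U,\tilde U$ be the extensions of $u,\tilde u$ on the Poincar\'e ball $(B^{n+1}, \bar G)$, satisfying the local degenerate elliptic equation \eqref{divergence-equation2}. The involution $\phi_B$ extends to a hyperbolic isometry $\Phi_B$ of $B^{n+1}$ fixing the totally geodesic hemisphere $H$ whose ideal boundary is $\partial B$, and by naturality of the extension $\tilde U$ is obtained from $U$ by applying $\Phi_B$ together with the extended conformal factor; in particular $\tilde U = U$ on $H$. Let $X^+$ be the component of $B^{n+1}\setminus H$ whose ideal boundary contains $S^n\setminus\bar B$, and set $W := U - \tilde U$. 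Subtracting the bulk equations and linearizing the boundary nonlinearity, $W$ satisfies $-\divergence(x^{1-2\gamma}\nabla W) + E\,W = 0$ on $X^+$, together with $W = 0$ on $H$ and the Robin-type boundary condition $-x^{1-2\gamma}\partial_x W = c_{n,\gamma}\,V\,(u-\tilde u)$ on $S^n\setminus\bar B$, where $V := \int_0^1 p(\theta u + (1-\theta)\tilde u)^{p-1}\,d\theta > 0$ with $p = (n+2\gamma)/(n-2\gamma)$. Since $U$ blows up at $\Lambda\subset S^n\setminus\bar B$ while $\tilde U$ is smooth there, $W\to +\infty$ near $\Lambda$. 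I would then exhaust $X^+$ by compact subdomains obtained by excising shrinking tubular neighborhoods of $\Lambda$ and apply a maximum principle with Hopf-type boundary lemma for this degenerate elliptic operator (using \cite{Fabes-Kenig-Serapioni:local-regularity-degenerate}, \cite{Cabre-Sire:I}, \cite{fractional-Yamabe}), concluding $W\geq 0$ on $X^+$ and hence $u\geq \tilde u$ on $S^n\setminus\bar B$ by letting $x\to 0$.

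With the comparison in hand, the convexity follows from a boundary derivative calculation. Since $u\geq \tilde u$ outside $B$ with equality on $\partial B$, $\partial_\nu u \geq \partial_\nu\tilde u$ at each point of $\partial B$, where $\nu$ is the outward $g_c$-unit normal. Because $\phi_B$ reverses $\nu$ along $\partial B$ and $\psi|_{\partial B}=1$, the chain rule gives $\partial_\nu\tilde u = -\partial_\nu u + u\,\partial_\nu\psi$, so $\partial_\nu u/u \geq \tfrac12\partial_\nu\psi$. In a stereographic model where $\partial B$ is the Euclidean sphere of radius $R$ and $\phi_B(y) = R^2 y/|y|^2$, one computes $\psi = (R/|y|)^{n-2\gamma}$, hence $\partial_\nu\psi = -(n-2\gamma)\kappa$ where $\kappa = 1/R$ is the principal curvature of $\partial B$ in $g_c$. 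The resulting inequality $\partial_\nu u/u \geq -\tfrac{n-2\gamma}{2}\kappa$ on $\partial B$ is, by the standard conformal transformation of the second fundamental form under $g = u^{4/(n-2\gamma)} g_c$, precisely the condition $II_g^{\mathrm{in}}\geq 0$, i.e.\ geodesic convexity of $\partial B$ in $g$. The hard part will be the maximum principle in the middle stage: $X^+$ is unbounded and the linearized boundary coefficient $V\sim u^{p-1}$ blows up at $\Lambda$, so standard maximum principles do not apply off the shelf, and the delicate point is to control the Robin boundary contributions on the regular part of $S^n\setminus\bar B$ during the exhaustion and verify that the divergence of $W$ at $\Lambda$ forces the global sign.
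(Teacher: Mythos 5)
Your overall architecture (conformal inversion $\phi_B$ through $\partial B$, covariance \eqref{eq:ccfcL}, lifting to the extension \eqref{divergence-equation2}, and the final computation converting $\partial_\nu u/u \geq -\tfrac{n-2\gamma}{2}\kappa$ into positivity of the second fundamental form) is sound, and the target inequality $u\geq\tilde u$ on $S^n\setminus\bar B$ is indeed the right statement. The genuine gap is the step you yourself flag as ``the hard part'': you propose to prove this comparison by a single application of a maximum principle to $W=U-\tilde U$ on the half-region $X^+$, but no such one-shot argument is available, and this is not a technicality --- it is the entire difficulty of the theorem. After linearizing the nonlinearity, the boundary condition for $W$ on $S^n\setminus\bar B$ is $-x^{1-2\gamma}\partial_x W = c_{n,\gamma}V\,W$ with $V>0$, i.e.\ a Robin condition of the \emph{unfavorable} sign on an unbounded (large) region; maximum/comparison principles for such problems genuinely fail without extra structure. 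Already for $\gamma=1$ the analogous linearized operator $-\Delta - p\,u^{p-1}$ has negative modes (e.g.\ at the standard bubble), so one cannot rule out that $W$ dips negative somewhere on the regular part of $S^n\setminus\bar B$ away from $\Lambda$ and $H$; the blow-up of $W$ at $\Lambda$ only controls a neighborhood of the singular set, and the exhaustion you describe has no mechanism to control the sign of $W$ on the rest of the boundary. In effect, assuming the comparison can be proved in one step begs the question, since that inequality is exactly the end product the method is supposed to deliver.

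The paper circumvents this by the Alexandroff moving plane method: stereographically project from a point $p\in\partial B$ so that $\partial B$ becomes the hyperplane $\{x^n=0\}$ and the singular set lies below it, note that $v$ is regular at infinity with the expansion \eqref{behavior-infinity}, and then (Step 1) start the reflection at $\lambda$ large using only these asymptotics (Lemma \ref{lemma:start}, which requires no PDE and no maximum principle), (Step 2) continue the plane downward using the strong maximum principle and Hopf lemma for $P_\gamma$ via the extension (Lemma \ref{lemma:claim}) --- crucially these are applied only at positions where $w_\lambda\geq 0$ is \emph{already known} --- together with a compactness/contradiction argument at infinity again based on \eqref{behavior-infinity}, and the presence of the singularity excludes $w_\lambda\equiv 0$, so the plane reaches $\lambda=0$. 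Your desired inequality $u\geq\tilde u$ corresponds precisely to the final position $\lambda=0$ of this continuation. To repair your write-up you would either have to supply a genuinely new positivity mechanism for the linearized Robin problem on $X^+$ (none is suggested), or replace the one-shot comparison by a moving-plane (or moving-sphere) continuation, at which point you have essentially reconstructed the paper's proof.
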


This result was proved in \cite{Schoen:number-metrics} for constant scalar curvature metrics, and also in the case
$\gamma\in(1,n/2)$ for locally conformally flat manifolds satisfying some extra conditions by \cite{Qing-Raske}.
The crucial step is the application of the Alexandroff moving plane method. As we show here, the same ideas work
in the fractional case. The moving plane method has been successfully applied to fractional order operators in
\cite{Qing-Raske} and \cite{Chen-Li-Ou:classification-solutions}, at least when the equation is rewritten as an
integral equation.  However, the proof in the present seting is simpler because of the equivalent formulation \eqref{divergence-equation2} and the precise asymptotics
\eqref{behavior-infinity}, so we include the details for the reader's convenience. Our proof follows the classical
arguments for the Laplacian by Gidas-Ni-Nirenberg in \cite{Gidas-Ni-Nirenberg},
\cite{Gidas-Ni-Nirenberg:symmetry-max-principle}. \\

For simplicity, we denote $P_\gamma:=P^{|dx|^2}_\gamma$. Let $v$ be a distributional solution of
\be\label{equation-euclidean}P_\gamma v=v^{\frac{n+2\gamma}{n-2\gamma}}\quad\mbox{in }\mathbb R^n.\ee
We will apply the Alexandroff reflection with respect to the planes $S_\lambda:=\{x\in\mathbb R^n: x^n=\lambda\}$.
Let $\Sigma_\lambda:=\{x\in\mathbb R^n \,: \,x^n>\lambda\}$ be the hyperplane lying above $S_\lambda$. Given $x=(x^1,\ldots,x^n)\in\Sigma_\lambda$, define $x^\lambda$ to be the reflection of $x$ with respect to the hyperplane $S_\lambda$, i.e., $x^\lambda:=(x^1,\ldots,x^{n-1},2\lambda-x^n)$. We also define $v_\lambda(x):=v(x^\lambda)$ and
$$w_\lambda(x):=v_\lambda(x)-v(x).$$
Note that the equation satisfied by $v_\lambda$ is the same as the satisfied by $v$. Although this fact is not clear for non-local operators, it is easily seen to be true in the Caffarelli-Silvestre extension \eqref{divergence-equation2}. Then, by linearity,
\be\label{difference-Alexandroff}P_\gamma w_\lambda= v_\lambda^{\frac{n+2\gamma}{n-2\gamma}}-v^{\frac{n+2\gamma}{n-2\gamma}}, \mbox{ weakly}.\ee

We will need a couple of preliminary results:

\begin{lemma}\label{lemma:start}
Let  $v$ be any function with asymptotics
\be\label{behavior-infinity} v(x)={|x|}^{2\gamma-n}\lp a+\sum_{i=1}^n \frac{b_i x^i}{|x|^2}+O\lp|x|^2\rp\rp \quad\mbox{when } |x|\to\infty,\ee
for some $a>0$. Then there exists $\lambda_0>0$ such that for all $\lambda\geq\lambda_0$,
$$w_\lambda(x)>0\quad\mbox{for all } x\in\Sigma_\lambda.$$
\end{lemma}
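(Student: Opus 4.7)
The plan is to carry out the classical ``initial placement'' step of the Alexandroff moving plane method. The geometric skeleton is the elementary identity
\[
|x|^2-|x^\lambda|^2=(x^n)^2-(2\lambda-x^n)^2=4\lambda(x^n-\lambda),
\]
which is strictly positive on $\Sigma_\lambda$, so $|x^\lambda|<|x|$ there. Moreover, $x\in\Sigma_\lambda$ forces $|x|\ge x^n>\lambda$, so once $\lambda$ is large the whole set $\Sigma_\lambda$ sits in the far field where the expansion \eqref{behavior-infinity} is valid at $x$.

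Next, I would expand $w_\lambda(x)=v(x^\lambda)-v(x)$ using \eqref{behavior-infinity}. The leading term is $a\bigl(|x^\lambda|^{2\gamma-n}-|x|^{2\gamma-n}\bigr)$, which is strictly positive since $a>0$, since $2\gamma-n<0$, and since $|x^\lambda|<|x|$. A one-line mean-value estimate applied to $r\mapsto r^{2\gamma-n}$ yields
\[
|x^\lambda|^{2\gamma-n}-|x|^{2\gamma-n}\;\ge\;2(n-2\gamma)\,\lambda(x^n-\lambda)\,|x|^{2\gamma-n-2},
\]
so the leading positive contribution is at least of order $\lambda(x^n-\lambda)\,|x|^{2\gamma-n-2}$. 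The next-order term in \eqref{behavior-infinity} involving the $b_i$'s produces corrections of order $|x|^{2\gamma-n-1}$ plus error of order $|x|^{2\gamma-n-3}$; the identity $(x^\lambda)^i=x^i$ for $i<n$ eliminates most of the $b_i$'s, and a direct comparison shows the positive leading piece dominates these corrections uniformly, provided $\lambda$ is large relative to $\max_i|b_i|/a$.

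The only real subtlety is that the reflected point $x^\lambda$ need not remain in the far field: for $x^n$ close to $2\lambda$ the point $x^\lambda$ can be arbitrarily close to the origin, and there \eqref{behavior-infinity} tells us nothing. To deal with this I would split $\Sigma_\lambda=A_\lambda\cup B_\lambda$, where $A_\lambda=\{x\in\Sigma_\lambda\,:\,|x^\lambda|\ge R_0\}$ and $R_0$ is fixed so large that $v(y)\ge\tfrac{a}{2}|y|^{2\gamma-n}$ whenever $|y|\ge R_0$. On $A_\lambda$ the argument of the previous paragraph applies directly to both $v(x)$ and $v(x^\lambda)$, giving $w_\lambda(x)>0$. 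On $B_\lambda$ the reflection lies in the compact set $\{|y|\le R_0\}$, on which the (positive, continuous) $v$ attains a strictly positive minimum $m$, whereas $v(x)\le 2a\,|x|^{2\gamma-n}\le 2a\,\lambda^{2\gamma-n}\to 0$ as $\lambda\to\infty$; for $\lambda$ large enough we get $v(x^\lambda)\ge m>v(x)$.

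The principal obstacle is exactly this dichotomy: one must uniformly control $w_\lambda$ on a set (namely $B_\lambda$) where the hypothesis \eqref{behavior-infinity} gives no information about $v(x^\lambda)$. The cleanest alternative would be to pass to the Kelvin transform $\tilde v(y)=|y|^{2\gamma-n}v(y/|y|^2)$, which is continuous (and indeed $C^1$) at $y=0$ with $\tilde v(0)=a>0$; then the reflection across $S_\lambda$ becomes an inversion through a sphere of radius $\sim 1/(2\lambda)$ tangent to the plane $\{y^n=0\}$, and the statement $w_\lambda>0$ reduces to a local positivity and monotonicity assertion about $\tilde v$ near the origin, which is immediate from $\tilde v(0)=a>0$ together with continuity.
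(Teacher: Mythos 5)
The paper's own ``proof'' of this lemma is a one-line citation to Lemma 2.2 of Gidas--Ni--Nirenberg, and the hypotheses of that lemma include, besides the pointwise expansion \eqref{behavior-infinity}, the corresponding expansion for the derivative, $\partial_n v(x)=-(n-2\gamma)a\,x^n|x|^{2\gamma-n-2}+O(|x|^{2\gamma-n-2})$. That extra hypothesis is exactly what your argument lacks, and the resulting gap is genuine, not cosmetic. Your split $\Sigma_\lambda=A_\lambda\cup B_\lambda$ and the treatment of $B_\lambda$ (reflected point trapped in a fixed compact set where $v$ has a positive lower bound, while $v(x)\le 2a\lambda^{2\gamma-n}\to0$) are fine, as is the mean-value lower bound $a\bigl(|x^\lambda|^{2\gamma-n}-|x|^{2\gamma-n}\bigr)\gtrsim a\lambda(x^n-\lambda)|x|^{2\gamma-n-2}$ on $A_\lambda$. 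The failure is in the sentence ``the positive leading piece dominates these corrections uniformly.'' As $x^n\downarrow\lambda$ the leading piece vanishes linearly in $x^n-\lambda$; the $b_i$-terms do degenerate at the same rate (they all carry a factor $x^n-\lambda$ or $|x|-|x^\lambda|$), but the unstructured remainder does not: all that \eqref{behavior-infinity} gives is $|\mathrm{Err}(x^\lambda)-\mathrm{Err}(x)|\le C\bigl(|x|^{2\gamma-n-2}+|x^\lambda|^{2\gamma-n-2}\bigr)$, which does not tend to $0$ as $x\to S_\lambda$. So in the strip $\{0<x^n-\lambda<\delta\}$ the claimed domination fails. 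This is not merely a technical loss: $v(x)=|x|^{2\gamma-n}\bigl(a+K|x|^{-2}\sin(|x|^2)\bigr)$ satisfies \eqref{behavior-infinity} with $b=0$, yet for $2K>(n-2\gamma)a$ one has $\partial_n v>0$ at points with $x^n=\lambda$ and $|x|$ arbitrarily large, hence $w_\lambda<0$ just above $S_\lambda$. No argument using only the undifferentiated expansion can close this step, so the lemma as literally stated (``any function with asymptotics...'') is itself slightly too strong.

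The repair is standard and available in the intended application, where $v$ is the Kelvin transform of a function smooth and positive at the point sent to infinity, so \eqref{behavior-infinity} may be differentiated. One then adds a third case to your dichotomy: for $x\in A_\lambda$ with $x^n-\lambda<1$, write $w_\lambda(x)=-\int_{2\lambda-x^n}^{x^n}\partial_n v(x',t)\,dt$ and note that every point $y$ of this short segment has $y^n\ge\lambda-1$ and $|y|\ge\lambda-1$, whence $\partial_n v(y)\le y^n|y|^{2\gamma-n-2}\bigl(-(n-2\gamma)a+C/\lambda\bigr)<0$ for $\lambda$ large; for $x\in A_\lambda$ with $x^n-\lambda\ge1$ your pointwise comparison does work, since then the leading term is at least $c\,a\lambda|x|^{2\gamma-n-2}$ (or at least $c\,a|x^\lambda|^{2\gamma-n}$ when $|x^\lambda|\le|x|/2$), which beats the remainders for $\lambda$ and $R_0$ large. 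Your closing Kelvin-transform remark has the same defect in compressed form: a strict monotonicity statement for $\tilde v$ near $0$ is not ``immediate from $\tilde v(0)=a>0$ together with continuity''; it again requires the $C^1$ expansion of $\tilde v$ at the origin.
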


\begin{proof}
This is just Lemma 2.2. in \cite{Gidas-Ni-Nirenberg}, and it does not use \eqref{equation-euclidean}.
\end{proof}

\begin{lemma}\label{lemma:claim}
Let $v$ a weak solution of \eqref{equation-euclidean}.
If for some $\lambda<\lambda_0$ we have that $w_\lambda(x)\geq 0$ but $w_\lambda\not\equiv 0$ in $\Sigma_\lambda$, then
\be\label{properties}w_\lambda(x)>0 \mbox{ in }\Sigma_\lambda\quad \mbox{ and }\quad\partial_n v(x)<0 \mbox{ on }S_\lambda.\ee
\end{lemma}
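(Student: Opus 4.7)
The plan is to use the Caffarelli--Silvestre extension in its Euclidean form: let $U(t,x)$, $t\geq 0$, $x\in\RR^n$, be the harmonic extension of $v$ with weight $t^{1-2\gamma}$, so that
\begin{equation*}
-\divergence(t^{1-2\gamma}\nabla U)=0\quad\text{in }\RR^{n+1}_+,\qquad U|_{t=0}=v,\qquad -c_{n,\gamma}\lim_{t\to 0^+}t^{1-2\gamma}\del_t U=P_\gamma v.
\end{equation*}
Define $U_\lambda(t,x):=U(t,x^\lambda)$ and $W_\lambda:=U_\lambda-U$; this $W_\lambda$ is antisymmetric in $x^n$ about $\lambda$, its trace at $t=0$ is $w_\lambda$, and by linearity it satisfies
\begin{equation*}
-\divergence(t^{1-2\gamma}\nabla W_\lambda)=0\qquad\text{in }\widetilde\Sigma_\lambda:=\{(t,x):t>0,\,x^n>\lambda\},
\end{equation*}
with $W_\lambda\equiv 0$ on the lateral Dirichlet face $\{x^n=\lambda\}$. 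On the horizontal face $\{t=0\}\cap\Sigma_\lambda$, the mean value theorem combined with \eqref{difference-Alexandroff} yields the Neumann-type condition
\begin{equation*}
-\lim_{t\to 0^+}t^{1-2\gamma}\del_t W_\lambda\;=\;c_{n,\gamma}\bigl(v_\lambda^{\,p}-v^{\,p}\bigr)\;=\;c_{n,\gamma}\,c(x)\,w_\lambda,\qquad p=\tfrac{n+2\gamma}{n-2\gamma},
\end{equation*}
where $c(x)=p\,\xi(x)^{p-1}\geq 0$ for some $\xi$ between $v$ and $v_\lambda$.

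First I would establish $W_\lambda\geq 0$ in $\widetilde\Sigma_\lambda$: the boundary values are nonnegative ($w_\lambda\geq 0$ on $\Sigma_\lambda$ by hypothesis, and $W_\lambda\equiv 0$ on the lateral face), the asymptotics \eqref{behavior-infinity} give decay of $U$, and hence of $W_\lambda$, at infinity, so the weak minimum principle for the $A_2$-weighted operator \cite{Fabes-Kenig-Serapioni:local-regularity-degenerate} provides the conclusion. Since $W_\lambda\not\equiv 0$ by the hypothesis $w_\lambda\not\equiv 0$, the interior strong maximum principle for the same weighted equation upgrades this to $W_\lambda>0$ throughout the open region $\widetilde\Sigma_\lambda$.

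To deduce $w_\lambda>0$ at every point of $\Sigma_\lambda$, suppose for contradiction that $w_\lambda(x_0)=0$ for some $x_0\in\Sigma_\lambda$. Then $(0,x_0)$ is a boundary minimum of $W_\lambda$ on the degenerate face $\{t=0\}$, and the Hopf-type boundary point lemma for $A_2$-weighted degenerate equations, as developed in the Caffarelli--Silvestre framework \cite{Cabre-Sire:I}, gives the strict inequality $-\lim_{t\to 0^+}t^{1-2\gamma}\del_t W_\lambda(t,x_0)<0$. The Neumann condition above, however, forces this limit to equal $c_{n,\gamma}c(x_0)w_\lambda(x_0)=0$, a contradiction. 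For the second assertion, differentiating $v_\lambda(x)=v(x^\lambda)$ in $x^n$ gives $\del_n w_\lambda(x)=-\del_n v(x^\lambda)-\del_n v(x)$, which at any $x\in S_\lambda$ reduces to $-2\del_n v(x)$. Applying the classical Hopf lemma to $W_\lambda$ at the non-degenerate Dirichlet face $\{x^n=\lambda\}$, where $W_\lambda>0$ in $\widetilde\Sigma_\lambda$ and $W_\lambda=0$ on the face, the inward normal derivative $\del_n W_\lambda$ is strictly positive, giving $\del_n w_\lambda>0$ on $S_\lambda$ and hence $\del_n v<0$.

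The main obstacle is the Hopf boundary-point lemma at the degenerate face $\{t=0\}$ with the $A_2$ weight $t^{1-2\gamma}$ and the mixed Neumann-type flux condition; the required weighted barrier construction is essentially standard in the fractional-Laplacian literature (see the linear regularity references cited after \eqref{divergence-equation2}, and in particular \cite{Cabre-Sire:I}), but careful verification is the nontrivial step. With this ingredient in hand, the above is a direct adaptation of the Gidas--Ni--Nirenberg scheme \cite{Gidas-Ni-Nirenberg,Gidas-Ni-Nirenberg:symmetry-max-principle} to the present nonlocal setting via extension.
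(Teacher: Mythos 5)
Your strategy is the same as the paper's: the paper's proof is a two-line appeal to ``a strong maximum principle and Hopf's lemma for the operator $P_\gamma$'' (citing \cite{fractional-Yamabe}, \cite{Cabre-Sire:I}), and those tools are exactly what you derive by passing to the extension \eqref{divergence-equation2}. For the first conclusion your write-up is actually more careful than the paper's: by working with $W_\lambda=U_\lambda-U$ you reduce everything to a \emph{local} degenerate elliptic problem in $\widetilde\Sigma_\lambda$ with nonnegative Dirichlet-type data on both faces, which sidesteps the usual difficulty that for a nonlocal operator the inequality $P_\gamma w_\lambda\ge 0$ \emph{inside} $\Sigma_\lambda$ together with $w_\lambda\ge 0$ \emph{inside} $\Sigma_\lambda$ is not by itself enough (note $w_\lambda<0$ on the other side of $S_\lambda$ by antisymmetry). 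The interior strong maximum principle plus the weighted Hopf lemma at interior points of the face $\{t=0\}$, contradicted against the Neumann flux $c_{n,\gamma}c(x)w_\lambda$, correctly yields $w_\lambda>0$ in $\Sigma_\lambda$.

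The one step that does not go through as written is the strict inequality $\del_n v<0$ on $S_\lambda$. You invoke ``the classical Hopf lemma at the non-degenerate Dirichlet face $\{x^n=\lambda\}$,'' but that lemma gives $\del_n W_\lambda(t,x)>0$ only at boundary points with $t>0$, where an interior ball tangent to the face exists and the operator is uniformly elliptic; the derivative you actually need is $\del_n w_\lambda(x)=\del_n W_\lambda(0,x)$ at points of the \emph{edge} $\{t=0\}\cap\{x^n=\lambda\}$, where the domain has a right-angle corner and the weight $t^{1-2\gamma}$ degenerates. Letting $t\to 0$ in the classical Hopf inequality only yields $\del_n w_\lambda\ge 0$. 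To get strictness one needs a corner (equivalently, antisymmetric) version of the Hopf lemma for the $t^{1-2\gamma}$-weighted operator: e.g.\ use that $W_\lambda$ is odd in $x^n-\lambda$ and solves the weighted equation in a full half-ball of $\{t>0\}$ around the corner point with an odd Neumann datum, and run a barrier of the form $(x^n-\lambda)\,\psi(t,|x'-x_1'|)$. This is standard in the later moving-plane literature for the Caffarelli--Silvestre extension but is a genuinely separate lemma from the classical Hopf lemma, and it is the nontrivial verification your last paragraph should point at (the paper itself is equally terse here, simply citing ``Hopf's lemma for $P_\gamma$''). The algebraic identity $\del_n w_\lambda=-2\del_n v$ on $S_\lambda$ is correct.
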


\begin{proof}
When $v$ solves the constant scalar curvature equation, this is just Lemma 2.2. and Lemma 4.3 in
\cite{Gidas-Ni-Nirenberg:symmetry-max-principle}. In our case, we need a strong maximum principle and Hopf's lemma
for the operator $P_\gamma$ (see \cite{fractional-Yamabe}, \cite{Cabre-Sire:I}). We know from \eqref{difference-Alexandroff}
that $P_\gamma w_\lambda\geq 0$. Since $w_\lambda\geq 0$ in $\Sigma_\lambda$ (and is not identically zero), and
since $w_\lambda$ vanishes on the boundary $S_\lambda$, the strong maximum principle gives that $w_\lambda>0$
in all of $\Sigma_\lambda$. On the other hand, Hopf's lemma implies that $\partial_n w_\lambda>0$ on $S_\lambda$.
Then, $\partial_n w_\lambda=\partial_n v_\lambda-\partial_n v=-2\partial_n v$, so we immediately have $\partial_n v<0$
along $S_\lambda$.\\
\end{proof}

\noindent\emph{Proof of Theorem \ref{thm:convex-boundary}: } Let $g$ be a complete metric of constant positive scalar
$Q_\gamma$ curvature on $\Omega\subset S^n$ of the form $g=u^{\frac{4}{n-2\gamma}}g_c$, and $B$ an open ball in $\Omega$.
Let $S=\partial B$ be the boundary sphere. Fix any point $p \in S$. Use stereographic projection to map $\Omega$ into
$\tilde\Omega\subset\mathbb R^n$ so that $p$ is mapped to infinity. Then $S$ is transformed to a hyperplane $\tilde S$,
and the projected $\partial\tilde\Omega$ lies on one side of  $\tilde S$, say below. Use linear coordinates
$(x^1,\ldots,x^n) \in \mathbb R^n$ with $\tilde S = \{x^n=0\}$.

By stereographic projection, the metric $g$ transforms to a conformally flat metric on $\mathbb R^n$,
$g_v=v^{\frac{4}{n-2\gamma}}\abs{dx}^2$. Since the scalar curvature equation is conformally covariant, we also  have
$$
\Delta^\gamma v= v^{\frac{n+2\gamma}{n-2\gamma}},
$$
on $\tilde\Omega\subset  \mathbb R^n$.  Note that $v$ is a weak solution of this equation on all of $\RR^n$.

Since the function $u$ is smooth and strictly positive at $p$, the function $v$ is regular at infinity, i.e.\ has the asymptotics
\eqref{behavior-infinity} for some $a>0$ as $|x|\to\infty$.

\emph{Step 1. Starting the reflection:}
Thanks to Lemma \ref{lemma:start}, we can initiate the reflection argument when $\lambda$ is sufficiently large.
Note that the equation satisfied by $v$ is not needed here since we have the precise behavior \eqref{behavior-infinity}.

\emph{Step 2. Continuation:} We now move the plane $S_\lambda$, so long as it does not touch the singular set.
Suppose that at some $\lambda_1>0$ we have $w_{\lambda_1}(x)> 0$ for all $x\in\Sigma_{\lambda_1}$, but $w_{\lambda_1}\not\equiv 0$
in $\Sigma_{\lambda_1}$.  Then the plane can be moved further; more precisely, there exists some $\epsilon>0$ not depending
on $\lambda_1$ such that $w_\lambda\geq 0$ in $\Sigma_\lambda$ for all $\lambda\in[\lambda_1-\epsilon,\lambda_1]$.

We observe first that because of Lemma \ref{lemma:claim} we must have
\be\label{moving0}
\partial_n v<0 \quad \mbox{on }\Sigma_{\lambda_1}.
\ee
Next, the proof of our claim follows as in Lemma 2.3. in \cite{Gidas-Ni-Nirenberg} by contradiction. Thus, assume that
there is a sequence $\lambda_j\to \lambda_1$ and a sequence of points $\{x_j\}$, $x_j\in\Sigma_{\lambda_j}$ such that
\be\label{moving1}w_{\lambda_j}(x_j)\leq 0.\ee
Either a subsequence, which we again call $\{x_j\}$, converges to $x_\infty\in\Sigma_{\lambda_1}$ or else $x_j\to\infty$.
In the first case, because of \eqref{moving1} we must have $\partial_n v(x_\infty)\leq 0$, thus contradicting \eqref{moving0}.
So $x_j\to\infty$. But in this second case may use the asymptotics for $v$ from \eqref{behavior-infinity}, that imply
$$
\frac{|x_j|^n}{\lambda_j-x_j^n} w_{\lambda_j}(x) \to -(n-2\gamma) a<0.
$$
This is a contradiction to \eqref{moving1}.

Finally, note that in this process we never have  $w_\lambda \equiv 0$ since the existence of the singularity of $v$
implies that it has no plane of symmetry. Hence the moving plane can be moved all the way to $\lambda=0$.

\emph{Step 3. Conclusions:} We have shown that $\Sigma_\lambda$ can be moved to $\lambda = 0$,  and then
$w_\lambda(x)>0$ for all $x\in\Sigma_0$ and
$$
\partial_n v(x)<0\quad\mbox{for all }x\in\overline{\Sigma_0}.
$$
Since $g=v^{\frac{4}{n-2\gamma}}|dx|^2$, the second fundamental form of any plane $S_\lambda$, $\lambda\geq 0$,
with respect to $g$ is given by
$$
\lp -\frac{4}{n-2\gamma} v^{-1}\frac{\partial v}{\partial x^n} I\rp.$$
The sign of $\partial_n v$ therefore implies that $S_\lambda$ is locally geodesically convex for all $\lambda\geq 0$.
When transferred back to the sphere, this shows that any round ball contained in $\Omega$ has locally geodesically
convex boundary, as claimed.
\qed

\medskip

\textbf{Acknowledgements: }
M.d.M. Gonz\'alez is supported by Spain Government project MTM2008-06349-C03-01 and GenCat
2009SGR345. R. Mazzeo is supported by the NSF grant DMS-0805529.
Y. Sire would like to thank the hospitality of the Department of Mathematics at Stanford University.


\bibliographystyle{abbrv}

\end{document}